\newtheorem{thrm}{Theorem}
\newtheorem{prop}[thrm]{Proposition}
\newtheorem{lemma}[thrm]{Lemma}
\newtheorem{cor}[thrm]{Corollary}
\newtheorem{conj}[thrm]{Conjecture}
\newcommand{\cee}{\mathbb{C}}
\newcommand{\arr}{\mathbb{R}}
\newcommand{\ii }{\sqrt{-1}}
\newcommand{\N}{\nabla}
\newcommand{\ag}{\alpha}
\newcommand{\bg}{\beta}
\newcommand{\dg}{\delta}
\newcommand{\ga}{\gamma}
\newcommand{\ab}{{\alpha\beta}}
\newcommand{\delbar}{\bar{\partial}}
\newcommand{\del}{\partial}
\numberwithin{thrm}{section}
\begin{document}
\title{$\delbar$-harmonic maps between almost Hermitian manifolds}
\author{Jess Boling}
\email{\href{mailto:jboling@uci.edu}{jboling@uci.edu}}
\address{Rowland Hall\\
         University of California\\
         Irvine, CA 92617}
\begin{abstract}
In this paper we study an energy of maps between almost Hermitian manifolds for which pseudo-holomorphic maps are global minimizers. We derive its Euler-Lagrange equation, the $\delbar$-harmonic map equation, and show that it coincides with the harmonic map equation up to first order terms. We prove results analogous to the those that hold for harmonic maps, including obstructions to the long time existence of the associated parabolic flow, an Eells-Sampson type result under appropriate conditions on the target manifold, and a bubbling result for finite time singularities on surfaces. We also consider examples of the flow where the target is a non-K\"ahler surface.
\end{abstract}
\maketitle

\section{Introduction and Statement of Main Results}
Given two Riemannian manifolds $(M,g)$ and $(N,h)$, a map $f:M\to N$ is said to be harmonic if it is a critical point of the energy functional
\begin{align*}
E(f)=\frac{1}{2}\int_M|Tf|^2dV
\end{align*}
with respect to compactly supported test variations of $f$, where $|Tf|^2$ is the norm squared of the derivative $Tf:TM\to TN$ as a section of $T^*M\otimes f^{-1}TN$. The Euler-Lagrange equation associated to $E$ is called the tension of $f$ and has the local coordinate expression
\begin{align*}
(tr_g\N Tf) ^i=\tau^i(f)=g^{\ab}(f^i_{\ab}-f^i_{\ga}\Gamma^{\ga}_{\ab}+f^j_\ag f^k_\bg\Gamma^i_{jk}).
\end{align*}
Here we use the convention that coordinates on $M$ are denoted with the Greek indices $\alpha$, $\beta$, $\ldots$ and so on while Roman indices $i$, $j$, $\ldots$  will denote coordinates on the target $N$. Harmonic maps are well studied objects in geometric analysis, with many results obtained toward their existence, regularity, and compactness; see the surveys in \cite{LW}, \cite{SY}, and \cite{EL}. We remind the reader specifically of the celebrated existence result of Eells and Sampson.
\begin{thrm}
Suppose that $(M,g)$ and $(N,h)$ are closed Riemannian manifolds where the sectional curvatures of $h$ are non-positive. Then any smooth $f_0:M\to N$ is homotopic to a harmonic map.
\end{thrm}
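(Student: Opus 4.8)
The plan is to deform $f_0$ to a harmonic map by running the \emph{harmonic map heat flow}
\begin{align*}
\del_t f = \tau(f), \qquad f(\cdot,0)=f_0,
\end{align*}
and extracting a limit as $t\to\infty$. First I would establish short-time existence: since $\tau$ is, up to lower order terms, the Laplacian of $f$ in local coordinates, the flow is a quasilinear parabolic system, and standard parabolic theory yields a unique smooth solution on a maximal interval $[0,T)$. Along the flow the energy is monotone, $\deet E(f(t)) = -\int_M|\tau(f)|^2\,dV \le 0$, so that $\int_0^\infty\!\!\int_M|\tau(f)|^2\,dV\,dt \le E(f_0)<\infty$.

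The analytic heart is the Bochner--Weitzenb\"ock identity for the energy density $e(f)=\tfrac12|Tf|^2$ along the flow. A direct computation yields
\begin{align*}
(\del_t-\Delta)\,e(f) = -|\N Tf|^2 - \sum_\ag \langle Tf(\mathrm{Ric}^M e_\ag),Tf(e_\ag)\rangle + \sum_{\ag,\bg} \langle R^N(Tf(e_\ag),Tf(e_\bg))Tf(e_\bg),Tf(e_\ag)\rangle .
\end{align*}
Here the term with $\mathrm{Ric}^M$ is controlled by a constant since $M$ is compact, and---this is where the hypothesis is used decisively---nonpositivity of the sectional curvature of $(N,h)$ forces the $R^N$ term to be $\le 0$, since $\langle R^N(X,Y)Y,X\rangle = K(X,Y)\,(|X|^2|Y|^2-\langle X,Y\rangle^2)$. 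Discarding the good gradient term, we obtain the differential inequality $(\del_t-\Delta)e(f)\le C_1\,e(f)$ with $C_1$ depending only on a lower Ricci bound for $g$.

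From this inequality I would derive the two estimates the argument needs. The maximum principle bounds $e(f)$ on any finite time interval by $e^{C_1 t}\sup_M e(f_0)$, so the energy density cannot blow up in finite time and the flow exists for all $t\ge 0$. For the uniform-in-time bound I expect the main obstacle to lie: combining $(\del_t-\Delta)e(f)\le C_1 e(f)$ with the uniform $L^1$ bound $\int_M e(f(t))\,dV = E(f(t))\le E(f_0)$ through a parabolic mean-value (local maximum) inequality should produce a constant $C$, independent of $t$, with $\sup_M e(f(t))\le C$ for all $t\ge 1$. This a priori bound is the crux, as it upgrades the merely decaying $L^2$ control of $\tau$ into genuine compactness; the exponential-in-$t$ bound from the maximum principle alone is not enough.

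With uniform energy density bounds in hand, parabolic Schauder estimates give uniform $C^k$ bounds on $f(\cdot,t)$ for all $t\ge 1$. Choosing a sequence $t_n\to\infty$ along which $\int_M|\tau(f(t_n))|^2\,dV\to0$, which is possible by the finiteness of the time integral of $\|\tau\|_{L^2}^2$, the maps $f(\cdot,t_n)$ subconverge in $C^\infty$ to a limit $f_\infty$ satisfying $\tau(f_\infty)=0$, which is therefore harmonic. Finally, the flow provides a continuous path from $f_0$ to each $f(\cdot,t_n)$, and for large $n$ the map $f(\cdot,t_n)$ is $C^0$-close to $f_\infty$, hence homotopic to it; composing the two homotopies shows $f_0$ is homotopic to the harmonic map $f_\infty$, completing the proof.
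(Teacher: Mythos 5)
Your proposal is correct and takes essentially the same approach the paper points to: the statement is the classical Eells--Sampson theorem, which the paper cites as being proved by exactly this gradient-descent (heat flow) method, and the paper's own Section 3 arguments for the $\delbar$-analogue mirror your steps one-for-one (Bochner formula plus maximum principle for long-time existence, then Moser's parabolic Harnack inequality combined with energy monotonicity to get uniform bounds on $|Tf|^2$ and subsequential smooth convergence to a critical point). No gaps worth flagging.
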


This result, proved with gradient descent methods, has inspired many of the heat flows which are topics of current research in differential geometry. For example Hamilton \cite{HAM} cites the Eells-Sampson result as the motivator for studying Ricci flow. In the current paper, inspired by the result of Eells-Sampson, we study a functional of maps between almost Hermitian manifolds and its associated parabolic flow.

Let $f:M\to N$ be a differentiable map between the almost Hermitian manifolds $(M,J_M,g)$ and $(N,J_N,h)$, and let its derivative be $Tf:TM\to TN$. In the case where the complex structures are integrable, $f$ is holomorphic in local complex coordinates if, and only if, $J_N Tf=Tf J_M$. In situations where complex coordinates do not exist, a map satisfying $J_N Tf=Tf J_M$ is said to be complex or pseudo-holomorphic. Note that the derivative $Tf:TM\to TN$ has an orthogonal decomposition
\[Tf=\frac{1}{2}(Tf+J_NTfJ_M)+\frac{1}{2}(Tf-J_NTfJ_M)\]
where the first component of this decomposition vanishes if, and only if, $f$ is pseudo-holomorphic.

Holomorphic maps are important objects in complex and symplectic geometry, as their moduli often contain information on the global structure of the source and target manifolds. For example, Gromov invariants count families of holomorphic maps from curves into a fixed symplectic manifold \cite{GW}, while the classification of compact complex surfaces with $b_1=1$ would be complete if certain homology classes had holomorphic representatives \cite{NAK}. 

Counting such curves motivates the search for holomorphic maps in a given homotopy class, and a perhaps naive attempt towards finding these maps is to apply gradient descent methods to functionals which are small when evaluated at holomorphic maps. For this reason, it is natural to define the pseudo-holomorphic energy of a map $f:(M,g,J_M)\to (N,h,J_N)$ to be
\[E_+(f)=\frac{1}{4}\int_M|Tf+J_NTfJ_M|^2dV_g.\]
This energy was first studied by Lichnerowicz in \cite{LN}, where it is called $E''(f)$. In that paper the relationship between $E_+$ and the harmonic map energy $E$ is studied for maps between almost K\"ahler manifolds.

Our first result is to give explicit formulas for the first and second variations of $E_+$, exact expressions of which are not found in the literature.
\begin{prop}
Let $f:M\times(-\epsilon,\epsilon)\to N$ be a smooth one parameter family of maps between the almost Hermitian manifolds $(M,g,J_M)$ and $(N,h,J_N)$ with compactly supported variation field $\partial_tf=v$. Then
\begin{align*}
\frac{d}{dt}E_+(f)=-\int_M\langle \tau+A,v\rangle dV,
\end{align*}
where $\tau$ is the tension of $f$ and $2A=\langle d^*\omega_M,f^\times\omega_N\rangle+\langle \omega_M,f^\times d\omega_N\rangle$, which has the local coordinate expression
\begin{align*}
2A^i=(d^*\omega)_{\ag}f^j_\bg\omega_{lj}g^{\ab}h^{li}+\omega_{\ab}f^j_\ga f^k_\delta (d\omega)_{ljk}g^{\ag\ga}g^{\bg\delta}h^{li},
\end{align*}
where $\omega_M(X,Y)=g(JX,Y)$.
\end{prop}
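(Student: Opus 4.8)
The plan is to first isolate the term by which $E_+$ differs from the ordinary Dirichlet energy $E$. Since $J_M$ and $J_N$ are pointwise isometries of $(TM,g)$ and $(TN,h)$, we have $|J_NTfJ_M|^2=|Tf|^2$, so that pointwise
\begin{align*}
|Tf+J_NTfJ_M|^2=2|Tf|^2+2\langle Tf,J_NTfJ_M\rangle.
\end{align*}
Integrating, $E_+(f)=E(f)+K(f)$, where $K(f)=\frac{1}{2}\int_M\langle Tf,J_NTfJ_M\rangle\,dV$. The first variation of $E$ is classical and produces exactly $-\int_M\langle\tau,v\rangle\,dV$, so the whole problem reduces to showing that $\frac{d}{dt}K(f)=-\int_M\langle A,v\rangle\,dV$ with $A$ as stated.

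Next I would rewrite $K$ so that its dependence on $f$ is through an honest pullback of a form on $N$. Using $\omega_M(X,Y)=g(J_MX,Y)$, the skew-adjointness of $J_N$ with respect to $h$, and the standard identities relating $J$, $g$, and $\omega$, a direct index manipulation gives
\begin{align*}
\langle Tf,J_NTfJ_M\rangle=-\langle\omega_M,f^{*}\omega_N\rangle,
\end{align*}
the right-hand pairing being the induced (full-contraction) inner product of two-forms on $M$ and $f^{*}$ the ordinary pullback; equivalently $\langle Tf,J_NTfJ_M\rangle\,dV=-\,f^{*}\omega_N\wedge\ast\omega_M$, so $K(f)=-\frac{1}{2}\int_M f^{*}\omega_N\wedge\ast\omega_M$. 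This is the conceptually important step: it exhibits $K$ as a pairing of the \emph{fixed} form $\ast\omega_M$ against the pullback $f^{*}\omega_N$, and makes transparent that only the non-K\"ahler defects $d\omega_N$ and $d^{*}\omega_M$ (equivalently $d\ast\omega_M$) can survive in the variation, since in the fully K\"ahler case the integrand is closed and $K$ is topological.

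The variation itself I would then extract by differentiating the pullback. Writing $F:M\times\I\to N$, $F(x,t)=f_t(x)$, and applying Cartan's formula to $\partial_t$ yields the first-variation-of-pullback identity
\begin{align*}
\frac{d}{dt}f_t^{*}\omega_N=f^{*}(\iota_vd\omega_N)+d\big(f^{*}(\iota_v\omega_N)\big),
\end{align*}
where $\iota_v$ denotes contraction of a form on $N$ against $v\in\Gamma(f^{-1}TN)$ followed by pullback of the remaining legs. Since $\ast\omega_M$ is $t$-independent, substituting this into $K$ splits the variation into two pieces. The term $f^{*}(\iota_vd\omega_N)\wedge\ast\omega_M$ directly produces the second summand of $2A$, the one carrying $d\omega_N$. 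For the exact term I would integrate by parts, moving $d$ off $f^{*}(\iota_v\omega_N)$ and onto $\ast\omega_M$; as $M$ is closed (or $v$ compactly supported) there is no boundary contribution, and the identity $d^{*}=\pm\ast d\ast$ converts $d\ast\omega_M$ into $d^{*}\omega_M$, producing the first summand of $2A$. Lowering and raising indices against $g$ and $h$ then reproduces the stated coordinate formula for $2A^i$.

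I expect the main obstacle to be bookkeeping rather than conceptual structure. The two delicate points are justifying the pullback-variation formula when $v$ is a section of $f^{-1}TN$ rather than a genuine vector field on $N$ (cleanly handled by passing to $F$ on $M\times\I$ as above), and pinning down every sign and numerical factor coming from the Hodge star, the two-form inner product, and the skew-adjointness of the almost complex structures, so that the outcome matches the precise coordinate expression and not merely its shape. As a cross-check I would also run the computation purely in coordinates—differentiating $g^{\ag\ga}g^{\bg\dg}(\omega_M)_{\ag\bg}(f^{*}\omega_N)_{\ga\dg}\sqrt{\det g}$ in $t$, integrating by parts, and recognizing $d^{*}\omega_M$ and $d\omega_N$ among the resulting contractions—which yields the same answer and is the most reliable way to fix the constants.
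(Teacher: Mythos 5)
Your proposal follows essentially the same route as the paper's proof: split $E_+=E+K$ with $K=-\frac{1}{2}\int_M\langle\omega_M,f^*\omega_N\rangle\,dV$ via the identity $\langle Tf,J_NTfJ_M\rangle=-\langle\omega_M,f^*\omega_N\rangle$, differentiate the pullback by the generalized Cartan formula $\frac{d}{dt}f^*\omega_N=df^*\iota_v\omega_N+f^*\iota_vd\omega_N$, and integrate by parts to turn $d$ into $d^*\omega_M$. The only cosmetic deviation is your optional Hodge-star rewriting of the pairing (where the paper's full-contraction normalization of the two-form inner product would introduce a factor of $2$ in $\langle\alpha,\beta\rangle\,dV=\alpha\wedge\ast\beta$), but your main argument does not depend on it and you correctly flag this normalization as a point requiring care.
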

Maps $f$ satisfying $\tau_+(f)=\tau(f)+A(f)=0$ will be called $\delbar$-harmonic. We pause to make some small observations about this expression. First, one immediately gets the result of Lichnerowicz in \cite{LN} on the equivalence between $E_+$ and $E$ critical maps if the source is balanced and the target is almost K\"ahler; we will give explicit, non-K\"ahler examples which show that $E_+$ critical maps are distinct from $E$ critical maps in general. The second observation is that, independent of any integrability assumptions on source or target, the Euler-Lagrange equation of $E_+$ is elliptic and semi-linear. We then compute the second variation of $E_+$ at a $C^2$ critical point.
\begin{prop}
Let $f:M\to N$ be a $C^2$ critical point of $E_+$ with respect to all compactly supported variations $\partial_tf=v$, so that $\tau_+(f)=\tau(f)+A(f)=0$. Then the second variation of $E_+$ at $f$ is given by
\begin{align*}
\frac{d^2}{dt^2}|_{t=0}E_+(f)=\int_M\langle Lv,v\rangle dV,
\end{align*}
where $L$ is the operator on sections of $f^{-1}TN$ given by
\begin{align*}
-Lv = \Delta v+tr_gR(& v,Tf)Tf +\frac{1}{2}\{ \langle d^*\omega_M,\omega_N(\cdot,\N v)\rangle^\sharp\\& +\langle d^*\omega_M,(\N_v\omega_N)(\cdot,Tf)\rangle^\sharp+\langle\omega_M,(d\omega_N)(\cdot,\N v,Tf)\rangle^\sharp\\& +\langle\omega_M,(d\omega_N)(\cdot,Tf,\N v)\rangle^\sharp\\& +\langle\omega_M,(\N_v d\omega_N)(\cdot,Tf,Tf)\rangle^\sharp\}.
\end{align*}
\end{prop}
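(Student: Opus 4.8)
The plan is to differentiate the first variation formula of the previous proposition a second time and to exploit the criticality hypothesis. Writing $v=\del_tf$ and letting $\N_t$ denote the pullback connection on $f^{-1}TN$ in the $t$-direction, I would start from
\begin{align*}
\frac{d}{dt}E_+(f)=-\int_M\langle \tau+A,v\rangle\,dV,
\end{align*}
note that the volume form $dV=dV_g$ on the fixed source is independent of $t$, and differentiate under the integral using compatibility of $\N_t$ with the induced inner product on $f^{-1}TN$:
\begin{align*}
\frac{d^2}{dt^2}E_+(f)=-\int_M\langle\N_t(\tau+A),v\rangle\,dV-\int_M\langle\tau+A,\N_tv\rangle\,dV.
\end{align*}
At $t=0$ the second integrand vanishes identically because $\tau+A=\tau_+(f)=0$ there, so the whole problem reduces to identifying the linearization $\N_t(\tau+A)|_{t=0}$ with $-Lv$.

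The tension piece is the classical computation. Writing $\tau=tr_g\N Tf$ and using the commutation relation $[\N_t,\N_\ag]W=R(v,\del_\ag f)W$ for sections $W$ of $f^{-1}TN$, together with $\N_t\del_\ag f=\N_\ag v$ (torsion-freeness and $[\del_t,\del_\ag]=0$), one obtains
\begin{align*}
\N_t\tau=tr_g\N^2v+tr_gR(v,Tf)Tf=\Delta v+tr_gR(v,Tf)Tf,
\end{align*}
which accounts for the first two terms of $-Lv$; here $\Delta=tr_g\N^2$ is the rough Laplacian and $R$ is the curvature of $h$.

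The substance of the argument is the linearization of $A$. The key observation is that the source data $\omega_M$ and $d^*\omega_M$ are fixed tensors on $M$, hence annihilated by $\N_t$, whereas the target forms $\omega_N$ and $d\omega_N$ are evaluated along $f$ and the factors of $Tf$ both vary. Interpreting $2A$ invariantly as $\langle d^*\omega_M,\omega_N(\cdot,Tf)\rangle$ plus $\langle\omega_M,d\omega_N(\cdot,Tf,Tf)\rangle$, with $\cdot$ the free target slot, I would apply the Leibniz rule for $\N_t$, using $\N_tTf=\N v$ on every factor of $Tf$ and $\N_t\omega_N=\N_v\omega_N$, $\N_td\omega_N=\N_vd\omega_N$ on the target forms. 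Differentiating the single $Tf$ in the first term produces $\langle d^*\omega_M,\omega_N(\cdot,\N v)\rangle$ and $\langle d^*\omega_M,(\N_v\omega_N)(\cdot,Tf)\rangle$; differentiating the two $Tf$ slots in the second term produces $\langle\omega_M,d\omega_N(\cdot,\N v,Tf)\rangle$ and $\langle\omega_M,d\omega_N(\cdot,Tf,\N v)\rangle$, while differentiating the form itself gives $\langle\omega_M,(\N_vd\omega_N)(\cdot,Tf,Tf)\rangle$. Raising the free target index with $h$ (the $\sharp$) and dividing by the factor of $2$ in $2A$ yields exactly the five bracketed terms of $-Lv$.

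The main obstacle I anticipate is the bookkeeping in this last step: carefully commuting $\N_t$ past the source-index contractions and verifying that the covariant derivative of each pulled-back form splits cleanly into a $\N v$ piece, coming from the variation of $Tf$, and a $\N_v$ piece, coming from the variation of the form along $f$, with the slots of $d\omega_N$ assigned in the correct order. Once the invariant Leibniz computation is organized so that these two mechanisms are kept separate, assembling $\N_t\tau+\N_tA$ and reading off $L$ is routine, and the stated formula follows.
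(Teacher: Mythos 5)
Your proposal is correct and follows essentially the same route as the paper: differentiate the first variation formula, use criticality $\tau_+(f)=0$ to kill the $\langle\tau+A,\N_tv\rangle$ term, compute $\N_t\tau=\Delta v+tr_gR(v,Tf)Tf$ by the standard commutation, and obtain the five bracketed terms by applying the Leibniz rule to $\N_tA$ with $\N_tTf=\N v$ and $\N_t$ acting on the pulled-back forms as $\N_v$. The paper simply asserts the resulting expression for $\N_tA$ without displaying the Leibniz bookkeeping, so your write-up is, if anything, slightly more explicit at that step.
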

A simple corollary of the previous proposition is:
\begin{cor}
Suppose that $M$ is closed and $f:M\to N$ is pseudo-holomorphic. Then the operator $L:C^\infty(f^{-1}TN)\to C^\infty(f^{-1}TN)$ in the previous proposition is non-negative. In particular, if $(M,g,J)$ is a closed almost Hermitian manifold the operator $L:C^\infty(TM)\to C^\infty(TM)$, given by
\begin{align*}
-Lv = \Delta v+Rc(& v)+\frac{1}{2}\{\langle d^*\omega,\omega(\cdot,\N v)+(\N_v\omega)(\cdot,id)\rangle^\sharp\\& +\langle\omega,d\omega(\cdot,\N v,id)\rangle^\sharp+\langle\omega,d\omega(\cdot,id,\N v)\rangle^\sharp\\&+\langle\omega,(\N_vd\omega)(\cdot,id,id)\rangle^\sharp\},
\end{align*}
is non-negative. Moreover, if $v\in C^\infty(TM)$ is a pseudo-holomorphic vector field then $Lv=0$.
\end{cor}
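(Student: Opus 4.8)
The plan is to derive the whole statement from the single observation that pseudo-holomorphic maps are the \emph{global} minimizers of $E_+$. Since $E_+(f)=\frac14\int_M|Tf+J_NTfJ_M|^2\,dV\ge 0$ with equality exactly when $f$ is pseudo-holomorphic, such an $f$ realizes the absolute minimum of $E_+$; in particular it is a critical point, so $\tau_+(f)=0$ and the second variation formula of the previous proposition is available. Because $f$ is a minimizer, its second variation is non-negative along every compactly supported variation. At a critical point the second variation depends only on the initial variation field $v=\partial_tf|_{t=0}$ and not on the acceleration (the acceleration enters only through a factor of $\tau_+(f)=0$), so choosing the geodesic variation $f_t(x)=\exp_{f(x)}(tv(x))$ realizes an arbitrary $v\in C^\infty(f^{-1}TN)$. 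Hence $\int_M\langle Lv,v\rangle\,dV\ge 0$ for all $v$, which is the asserted non-negativity of $L$.

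For the specialization I would take $N=M$ and $f=\mathrm{id}_M$. The identity is pseudo-holomorphic since $T\,\mathrm{id}=I$ commutes with $J$, so the first part applies verbatim and the resulting operator $L$ on $C^\infty(TM)$ is non-negative. The displayed formula is just the general expression with $Tf=I$ and $\omega_M=\omega_N=\omega$: the curvature term $tr_gR(v,Tf)Tf$ contracts to the Ricci operator $Rc(v)$, the two $d^*\omega$ terms combine by linearity of the pairing into $\langle d^*\omega,\omega(\cdot,\N v)+(\N_v\omega)(\cdot,\mathrm{id})\rangle^\sharp$, and the three $d\omega$ terms reduce term by term.

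For the final claim, recall that a pseudo-holomorphic vector field $v$ is one with $\mathcal{L}_vJ=0$, so its flow $\phi_t$ (complete, as $M$ is closed) preserves $J$ and therefore consists of pseudo-holomorphic self-maps of $M$. Each $\phi_t$ then satisfies $E_+(\phi_t)=0$, so $\phi_t$ is an entire path of critical points, $\tau_+(\phi_t)=0$ for all $t$. Since the operator $L$ of the second variation is precisely (minus) the linearization of the Euler-Lagrange operator $\tau_+$ at the critical point $\mathrm{id}_M$, differentiating $\tau_+(\phi_t)=0$ at $t=0$, where $\partial_t\phi_t|_{0}=v$, yields $Lv=0$ as an operator identity. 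Alternatively, using the flow as a variation of $\mathrm{id}_M$ gives $\int_M\langle Lv,v\rangle\,dV=\frac{d^2}{dt^2}|_{0}E_+(\phi_t)=0$, and combined with $L\ge 0$ this again forces $Lv=0$.

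The routine points are the tensorial simplification for $f=\mathrm{id}$ and the identification of $\mathcal{L}_vJ=0$ with the flow being pseudo-holomorphic. The step deserving the most care, which I view as the main obstacle, is the passage from the vanishing of the scalar quantity $\int_M\langle Lv,v\rangle\,dV$ to the pointwise identity $Lv=0$: in the quadratic-form argument this requires $L$ to be self-adjoint, which holds because the Hessian $B(v,w)=\partial_s\partial_t|_{0}E_+$ of a smooth functional at a critical point is a symmetric bilinear form and hence is represented by a self-adjoint operator; one then applies the semidefinite Cauchy--Schwarz inequality to $\langle L(v+sw),v+sw\rangle_{L^2}\ge 0$ to conclude $\langle Lv,w\rangle_{L^2}=0$ for all $w$. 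The cleaner linearization argument above sidesteps this by producing $Lv=0$ directly, but it relies on recognizing $L$ as the linearized tension, so I would make that identification explicit.
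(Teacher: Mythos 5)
Your proposal is correct and follows essentially the same route as the paper: pseudo-holomorphic maps are global minimizers of $E_+\geq 0$, hence stable critical points, which gives $L\geq 0$; the identity map (always pseudo-holomorphic) specializes this to $C^\infty(TM)$; and the flow of a pseudo-holomorphic vector field yields a one-parameter family of pseudo-holomorphic maps through the identity, forcing $Lv=0$. Your careful handling of the final step---passing from $\int_M\langle Lv,v\rangle\,dV=0$ to the pointwise identity $Lv=0$, either by identifying $L$ with minus the linearization of $\tau_+$ or via self-adjointness and the semidefinite Cauchy--Schwarz inequality---fills in a detail the paper's proof leaves implicit.
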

The non-negativity of $L$ and its relationship to holomorphic vector fields is known in the K\"ahler setting. For example, by applying the non-negativity of $L$ to the gradient of an eigenfunction of the Laplacian, a version of the Obata-Lichnerowicz theorem for K\"ahler manifolds is proved in \cite{URA}. We can then give a similar lowest eigenvalue bound in the almost-K\"ahler setting.
\begin{cor}
If $(M,g,J)$ is a compact, almost K\"ahler manifold with Ricci curvature $Rc\geq\alpha>0$ then the first eigenvalue of the Laplacian satsfies $\lambda_1\geq 2\alpha$.
\end{cor}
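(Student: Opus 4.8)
The plan is to follow the strategy attributed above to \cite{URA}: apply the non-negativity of the operator $L$ to the gradient of a first eigenfunction, and combine the resulting integral inequality with the classical Bochner identity. The factor of $2$ in the bound will come precisely from replacing the pointwise Kato estimate $|\N^2\phi|^2\geq\frac{1}{n}(\Delta\phi)^2$ used in the usual Lichnerowicz argument by the stronger spectral estimate that the almost K\"ahler geometry provides.

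First I would record the simplification of $L$ afforded by the almost K\"ahler hypothesis. Since $d\omega=0$, every term in the curly braces of the previous corollary carrying a factor of $d\omega$ or $\N_v d\omega$ vanishes identically. Moreover $d\omega=0$ forces $d^*\omega=0$: as $*\omega$ is proportional to $\omega^{n-1}$ and $d(\omega^{n-1})=(n-1)\,\omega^{n-2}\wedge d\omega=0$, the form $\omega$ is balanced, which annihilates the remaining $\langle d^*\omega,\cdot\rangle$ terms. Hence on an almost K\"ahler manifold the operator reduces to $-Lv=\Delta v+Rc(v)$, i.e. $L=\N^*\N-Rc$ on vector fields, and the previous corollary (with $f=\mathrm{id}$, which is pseudo-holomorphic) gives $\int_M\langle Lv,v\rangle\,dV=\int_M\bigl(|\N v|^2-Rc(v,v)\bigr)\,dV\geq 0$ for every $v\in C^\infty(TM)$.

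Next, let $\phi$ be a first eigenfunction, $\Delta\phi=-\lambda_1\phi$, and take $v=\N\phi$, so $\N v=\N^2\phi$ is the Hessian. The non-negativity of $L$ then reads
\[\int_M|\N^2\phi|^2\,dV\;\geq\;\int_M Rc(\N\phi,\N\phi)\,dV.\]
On the other hand, integrating the Bochner formula $\frac{1}{2}\Delta|\N\phi|^2=|\N^2\phi|^2+\langle\N\phi,\N\Delta\phi\rangle+Rc(\N\phi,\N\phi)$ over the closed manifold $M$ and using $\Delta\phi=-\lambda_1\phi$ yields
\[\int_M|\N^2\phi|^2\,dV+\int_M Rc(\N\phi,\N\phi)\,dV=\lambda_1^2\int_M\phi^2\,dV.\]
Feeding the inequality into this identity gives $\lambda_1^2\int_M\phi^2\,dV\geq 2\int_M Rc(\N\phi,\N\phi)\,dV\geq 2\alpha\int_M|\N\phi|^2\,dV$, and since $\int_M|\N\phi|^2\,dV=\lambda_1\int_M\phi^2\,dV$, dividing by $\lambda_1\int_M\phi^2\,dV>0$ produces $\lambda_1\geq 2\alpha$.

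The step I expect to demand the most care is the reduction of $L$ to $\N^*\N-Rc$: one must verify that the almost K\"ahler condition genuinely annihilates all the first-order terms (in particular confirming $d^*\omega=0$) and be scrupulous about the sign convention, since $\Delta$ here is the trace Laplacian rather than $\N^*\N$. Everything after that is elementary. Finally, I would note that equality $\lambda_1=2\alpha$ forces $\int_M\langle Lv,v\rangle\,dV=0$ for $v=\N\phi$, hence $L(\N\phi)=0$, so by the last assertion of the previous corollary $\N\phi$ is a pseudo-holomorphic vector field; this identifies the rigidity case.
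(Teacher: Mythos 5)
Your proof is correct and takes essentially the same approach as the paper: non-negativity of the operator $L$, which on an almost K\"ahler manifold reduces to $\nabla^*\nabla-Rc$ acting on vector fields, applied to the gradient of a first eigenfunction and combined with the Bochner formula. The only difference is organizational---the paper computes $L\nabla\phi=\lambda\nabla\phi-2Rc(\nabla\phi)$ pointwise from the commutation formula $\Delta\nabla\phi=\nabla\Delta\phi+Rc(\nabla\phi)$ and then integrates once, whereas you route the same identity through the integrated scalar Bochner formula; your explicit verification that $d\omega=0$ forces $d^*\omega=0$ (which the paper leaves implicit) and your closing rigidity remark are sound refinements.
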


Our next proposition concerns the well posedness of the gradient flow associated to $E_+$ and shows that it has the same obstructions to long time existence as the harmonic map heat flow.
\begin{prop}
Let $f_0:M\to N$ be a smooth map between the closed almost Hermitian manifolds $(M,g,J_M)$ and $(N,h,J_N)$. Then there is a maximal $T$ such that the initial value problem
\begin{align*}
\partial_tf&=\tau_+(f)\\
f|_{t=0}&=f_0
\end{align*}
has a unique smooth solution on $M\times[0,T)$. Moreover, if $T<\infty$ then
\begin{align*}
\lim_{t\to T}|Tf|_{C^0}=\infty.
\end{align*} 
\end{prop}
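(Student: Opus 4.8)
The plan is to treat the flow as a semilinear strictly parabolic system and to follow the standard program for the harmonic map heat flow, the point being that $\tau_+=\tau+A$ differs from the harmonic tension $\tau$ only by the first-order term $A$, which therefore does not alter the leading-order parabolic structure. Concretely, I would first use the Nash embedding theorem to isometrically embed $(N,h)$ into some Euclidean space $\arr^K$ and study the composite $u=\iota\circ f:M\to\arr^K$. Writing $\tau(f)=\Delta_g u-A_N(u)(\N u,\N u)$, where $A_N$ is the second fundamental form of $N\subset\arr^K$ and $\Delta_g$ is the rough Laplacian on $M$, and observing from Proposition 1.2 that $A$ is a section of $f^{-1}TN$ whose components are at most quadratic in $\N f$ with coefficients smooth in $f$ (built from the fixed background tensors $d^*\omega_M$, $\omega_M$, $\omega_N$ and $d\omega_N$), the evolution equation takes the form
\[
\partial_t u=\Delta_g u+Q(u)(\N u,\N u)+P(u)\cdot\N u,
\]
a system whose principal symbol is that of the scalar heat operator and whose nonlinearity is at most quadratic in the gradient.

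Short-time existence and uniqueness then follow from standard parabolic theory on the closed manifold $M$, so that no boundary conditions arise. I would linearize about $u_0$ and use that $\partial_t-\Delta_g$ is an isomorphism between the parabolic H\"older spaces $C^{2+\alpha,1+\alpha/2}$ and $C^{\alpha,\alpha/2}$ with prescribed initial data, and then close a contraction-mapping argument on a sufficiently short time interval; an inverse-function-theorem argument works equally well. To guarantee that the solution stays on $N$, I would note that the normal component of the right-hand side is exactly $-A_N(u)(\N u,\N u)$, since $A$ is tangent to $N$ by construction, so this matches the harmonic map flow verbatim and the usual maximum-principle argument applied to $\mathrm{dist}(u,N)^2$ shows $u(\cdot,t)\in N$ for all $t$. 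Parabolic Schauder bootstrapping upgrades the solution to $C^\infty$ and yields a maximal existence time $T$.

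For the blow-up criterion I would argue by contradiction: suppose $T<\infty$ but $\sup_{[0,T)}|Tf|_{C^0}=\Lambda<\infty$. Since $N$ is compact, $u$ and $\N u$ are then uniformly bounded, so the right-hand side of the embedded equation is bounded in $L^\infty(M\times[0,T))$ with a bound depending only on $\Lambda$ and the fixed geometry. Parabolic $L^p$ estimates give uniform bounds on $u$ in $W^{2,1}_p$ for every $p$, and the parabolic Sobolev embedding yields a uniform $C^{\alpha,\alpha/2}$ bound on $\N u$. This makes the right-hand side uniformly H\"older continuous up to $T$, so Schauder estimates bound $u$ in $C^{2+\alpha,1+\alpha/2}$, and differentiating the equation and iterating produces uniform $C^k$ bounds for every $k$ up to time $T$. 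Hence $u(\cdot,t)$ converges smoothly to a limiting smooth map as $t\to T$, and restarting the flow from this limit extends the solution beyond $T$, contradicting maximality.

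I expect the only genuinely nonroutine point to be the verification that the extra term $A$ does not disturb the standard machinery: one must check that $A$ is tangential, so that the stay-on-$N$ argument is unaffected, and that its dependence on $\N f$ is no worse than quadratic, so that it is absorbed into $Q$ and $P$ without enlarging the relevant class of nonlinearities. Both facts follow immediately from the coordinate expression in Proposition 1.2, after which everything reduces to a transcription of the classical short-time existence and blow-up theory for the harmonic map heat flow.
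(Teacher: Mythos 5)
Your proposal is correct and takes essentially the same route as the paper: both rest on the observation that $A$ contains no second-order terms and is at most quadratic in $Tf$, so $\tau_+$ has the same principal symbol as $\tau$ and standard semi-linear parabolic theory yields short-time existence, uniqueness, and smoothness, while the blow-up criterion follows by obtaining smooth convergence at $T$ under a gradient bound and restarting the flow to contradict maximality. The paper simply cites this theory in two sentences, whereas you implement it explicitly via the Nash embedding, a contraction-mapping argument, and parabolic bootstrapping, but the substance is identical.
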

Equipped with this proposition, we prove the following long time existence theorem for solutions to the $\delbar$-harmonic map heat flow with negatively curved, almost K\"ahler targets.
\begin{thrm}
Let $f_0:M\to N$ be a smooth map between the closed almost Hermitian manifolds $(M,g,J_M)$ and $(N,h,J_N)$. Suppose that $\omega_h$ is almost K\"ahler and the sectional curvature of $h$ is negative. Then the solution to the $\delbar$-harmonic map heat flow with initial condition $f_0$ has a unique smooth solution on $M\times[0,\infty)$.
\end{thrm}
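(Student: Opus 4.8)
The plan is to mimic the Eells--Sampson proof for harmonic maps, using the Bochner technique and the key fact (established earlier in the excerpt) that the $\delbar$-harmonic map equation $\tau_+(f)=\tau(f)+A(f)=0$ differs from the harmonic map equation only by the first-order term $A$, and that under the almost K\"ahler hypothesis $d\omega_N=0$, so the target-curvature contributions to $A$ simplify dramatically. Since the flow $\partial_t f=\tau_+(f)$ has the same $C^0$-obstruction to long-time existence as the harmonic map heat flow (by the preceding proposition), it suffices to derive an a priori $C^0$ bound on $|Tf|$ that holds uniformly on any finite time interval; if $|Tf|_{C^0}$ cannot blow up in finite time, the solution extends to $[0,\infty)$.

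**First I would** compute a Bochner--Weitzenb\"ock formula for the evolution of the energy density $e(f)=\frac{1}{2}|Tf|^2$ along the flow. For the harmonic map heat flow, Eells--Sampson show
\begin{align*}
\left(\partial_t-\Delta\right)e(f)=-|\N Tf|^2+\langle \mathrm{Ric}^M(Tf),Tf\rangle-\langle \mathrm{tr}_g R^N(Tf,Tf)Tf,Tf\rangle.
\end{align*}
Here the crucial sign is that the target-curvature term has a favorable sign when the sectional curvature of $h$ is negative, while the source-Ricci term is bounded by $C\,e(f)$ on the closed manifold $M$. The first task is to verify that the additional first-order term $A$ in $\tau_+$ contributes only lower-order corrections to this evolution inequality: schematically, differentiating $A$ produces terms controlled by $d^*\omega_M$, $\N\omega_M$ (both bounded tensors on the fixed closed source $M$), together with at most $|\N Tf|\,|Tf|$ and $|Tf|^2$ factors. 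Under the almost K\"ahler assumption the $d\omega_N$-dependent pieces of $A$ and of $L$ drop out entirely, so only the $\langle d^*\omega_M,f^\times\omega_N\rangle$ part survives, which is manifestly first order in $f$.

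**The main obstacle** will be absorbing these extra terms into the Bochner inequality so as to preserve the structure needed for the maximum principle. The danger is the cross term of the form $\langle d^*\omega_M,\omega_N(\cdot,\N Tf)\rangle$, which is first order in the derivatives of $Tf$ and could in principle compete with the good negative term $-|\N Tf|^2$. The standard device is Cauchy--Schwarz with a small parameter: estimate $|\langle d^*\omega_M,\omega_N(\cdot,\N Tf)\rangle|\le \tfrac{1}{2}|\N Tf|^2+C\,|d^*\omega_M|^2|Tf|^2$, so that half of the $-|\N Tf|^2$ term dominates the worst derivative term while the remaining error is of order $e(f)$. One must check that the negative-curvature term $-\langle \mathrm{tr}_g R^N(Tf,Tf)Tf,Tf\rangle\le 0$ is genuinely nonpositive and is not spoiled by the correction (it is not, as it comes from the target geometry and is unaffected by $d^*\omega_M$). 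After these manipulations one arrives at a differential inequality of the form
\begin{align*}
\left(\partial_t-\Delta\right)e(f)\le C\,e(f)
\end{align*}
for a constant $C$ depending only on the fixed geometries of $M$ and $N$ (in particular on bounds for $\mathrm{Ric}^M$, $|d^*\omega_M|$, and the curvature of $h$), with no positive contribution from $e(f)^2$ or higher powers.

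**Finally I would** invoke the parabolic maximum principle: the inequality $(\partial_t-\Delta)e\le Ce$ yields, via comparison with the ODE $\dot{\phi}=C\phi$, the uniform bound $\max_{M}e(f)(\cdot,t)\le e^{Ct}\max_M e(f_0)$. Hence $|Tf|_{C^0}$ remains finite on every finite interval $[0,T]$ and cannot blow up as $t\to T<\infty$. By the preceding proposition, the only obstruction to long-time existence is precisely such blow-up, so the solution must exist and remain smooth on $M\times[0,\infty)$; standard parabolic bootstrapping gives the higher regularity and uniqueness. The heart of the argument is thus entirely contained in showing that the first-order correction $A$ does not disturb the sign structure of the Eells--Sampson Bochner formula, which the almost K\"ahler hypothesis ($d\omega_N=0$) is designed to guarantee.
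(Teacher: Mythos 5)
Your overall strategy---the Bochner evolution inequality for $e(f)=\frac{1}{2}|Tf|^2$, absorption of the $\N Tf$ cross term by Cauchy--Schwarz with a small parameter, and the parabolic maximum principle combined with the blow-up criterion---is exactly the paper's strategy, but there is a genuine gap in your accounting of the terms produced by differentiating $A$. You claim that under the almost K\"ahler hypothesis the surviving piece $2A^\flat=\langle d^*\omega_M,f^\times\omega_N\rangle$ contributes ``only lower-order corrections,'' and you conclude an inequality $(\partial_t-\Delta)e\le Ce$ with ``no positive contribution from $e(f)^2$ or higher powers.'' This is false in general: almost K\"ahler means $d\omega_N=0$, \emph{not} $\N^h\omega_N=0$ (the latter is the genuinely K\"ahler case). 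When you compute $\langle\N A,Tf\rangle$, the covariant derivative falls on the pulled-back form $f^\times\omega_N$ and, by the chain rule, produces a term of the schematic form $d^*\omega_M*\N^h\omega_N*(Tf\wedge Tf)*Tf$, which is \emph{cubic} in $|Tf|$. No choice of Cauchy--Schwarz constants lets you bound a cubic term by $Ce(f)$, so this term, left unaccounted for, destroys the linear differential inequality your maximum-principle argument needs.

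This is precisely where the strict negativity of the sectional curvature---which your proof never actually uses (you invoke only nonpositivity of the target curvature term)---enters the paper's argument. Writing the curvature contribution as $-\langle Q,Tf\rangle\le K|Tf\wedge Tf|^2-R|Tf|^2$ with $K<0$ an upper bound for the sectional curvature of $h$, one applies Young's inequality to the cubic term, $|Tf\wedge Tf|\,|Tf|\le\frac{\epsilon}{2}|Tf\wedge Tf|^2+\frac{1}{2\epsilon}|Tf|^2$, and chooses $\epsilon$ so small that $\frac{C_1\epsilon}{2}+K\le 0$; only then are all the dangerous terms absorbed, yielding $(\partial_t-\Delta)|Tf|^2\le C|Tf|^2$ and hence the finite-time bound. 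As written, your argument proves the theorem only in the case $\N^h\omega_N=0$, i.e.\ for genuinely K\"ahler targets, where, as the paper remarks after its proof, nonpositive curvature indeed suffices. To repair the proposal you must add the cubic term to your list of corrections and use the strictly negative curvature to absorb it.
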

In some cases, for example when $J_N$ is integrable, the negative curvature assumption in this result can be weakened to non-positive curvature. We will also discuss the difficulties in improving this result to the general case where $d\omega_N\neq 0$; stronger conditions on the curvature in relation to the complex structure are needed for our proof to go through without substantial modification.

Convergence of the harmonic map heat flow at infinite time requires a parabolic Harnack inequality to prove a bound on $|Tf|^2$ on all of $M\times [0,\infty)$ given a uniform bound on $|Tf|_{L^2}$. In the case of the harmonic map energy this bound on $|Tf|_{L^2}=2E(f)$ is free because the flow is precisely given by following the negative $L^2$-gradient of $E$. In the context of the previous theorem, we do not have such a bound because the pseudo-holomorphic energy $E_+$ is in general non-coercive; we do not expect the energy to be bounded at infinite time. In the presence of such a bound, convergence to a $\delbar$-harmonic map at infinite time follows. We attempt to get around this difficulty by considering a family of energies $E_a$ which are coercive if $|a|<1$ and which contain $E_+=E_1$ as a limiting case. This family is given by linear interpolation $E_a=(1-a)E+aE_+$ between the harmonic map energy $E_0=E$ and the pseudo-holomorphic energy $E_1=E_+$. Because $(1-|a|)E\leq E_a$, we obtain the following existence result for these modified functionals.
\begin{thrm}\label{aconv}
Let $f:M\to N$ be a smooth map between the closed, almost Hermitian manifolds $(M,g,J_M)$ and $(N,h,J_N)$. Suppose that the sectional curvatures of $N$ are negative and that $\omega_h$ is almost K\"ahler. Then for all $|a|<1$ the parabolic flow corresponding to the functional $E_a$, beginning with $f$, exists for all time and has subsequential convergence to a critical point of $E_a$.
\end{thrm}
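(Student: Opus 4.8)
The plan is to run the $E_a$-gradient flow and exploit the coercivity that is available precisely when $|a|<1$ to produce the uniform energy bound that drives convergence. The flow equation is $\partial_t f = \tau_a(f) := (1-a)\tau(f) + a\tau_+(f) = \tau(f) + aA(f)$, which differs from the harmonic and $\delbar$-harmonic flows only through the coefficient of the first-order term $A$. Since the principal part is unchanged, short-time existence and the blow-up criterion $\lim_{t\to T}|Tf|_{C^0}=\infty$ follow verbatim from the well-posedness proposition above. Along the flow $\frac{d}{dt}E_a(f_t) = -\int_M|\partial_t f|^2\,dV \leq 0$, so $E_a(f_t)\leq E_a(f_0)$ for all $t$, and the coercivity inequality $(1-|a|)E\leq E_a$ then yields the crucial bound $\int_M|Tf_t|^2\,dV \leq \frac{2}{1-|a|}E_a(f_0)$, uniform in $t\geq 0$.

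Next I would upgrade this $L^2$ bound to a uniform $C^0$ bound on $|Tf|$ by a Bochner estimate followed by Moser iteration. Because $\omega_h$ is almost K\"ahler we have $d\omega_N=0$, so the second term in the expression for $A$ vanishes and $A$ reduces to a first-order term built from $d^*\omega_M$ contracted against $Tf$; this is the simplification that keeps the correction tractable. Differentiating the energy density $e(f)=\frac{1}{2}|Tf|^2$ along the flow and using the negative sectional curvature of $N$ to give the target-curvature term its favorable sign, one obtains a differential inequality $(\partial_t-\Delta)e(f)\leq C\,e(f)$, with $C$ depending only on the geometry of $M$, $N$, and the $C^0$-norm of $d^*\omega_M$; the extra terms produced by $\N(aA)$ are absorbed into the good $-|\N Tf|^2$ term and into $C\,e(f)$ by Cauchy--Schwarz. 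Feeding the uniform $L^2$ bound into a parabolic mean-value (Moser) iteration then gives $\sup_M|Tf_t|^2\leq C'$ uniformly in $t$. This contradicts the blow-up criterion, so $T=\infty$, and parabolic Schauder bootstrapping upgrades the bound to uniform $C^k$ control for every $k$.

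For subsequential convergence I would integrate the energy identity to get $\int_0^\infty\|\partial_t f\|_{L^2}^2\,dt = E_a(f_0)-\lim_{t\to\infty}E_a(f_t)\leq E_a(f_0)<\infty$. Hence there is a sequence $t_k\to\infty$ with $\|\partial_t f_{t_k}\|_{L^2}=\|\tau_a(f_{t_k})\|_{L^2}\to 0$. The uniform $C^k$ bounds let Arzel\`a--Ascoli extract a subsequence converging in $C^\infty$ to a map $f_\infty$, and the vanishing of the $L^2$-norm of $\tau_a$ in the limit forces $\tau_a(f_\infty)=0$, so $f_\infty$ is a critical point of $E_a$.

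The main obstacle is the Bochner-plus-Moser step. The difficulty is not the first-order correction $A$ itself---under the almost K\"ahler hypothesis it is merely a $d^*\omega_M$-weighted term in $Tf$ absorbed into the constant $C$---but ensuring that the iteration constants in the Moser scheme do not degenerate as $t\to\infty$, so that the resulting $C^0$ bound is genuinely time-independent. This is exactly the parabolic Harnack-type input flagged in the discussion preceding the theorem, and it is here that the coercivity of $E_a$ for $|a|<1$ is indispensable: it supplies the uniform $L^2$ control of $Tf$ that the non-coercive energy $E_+=E_1$ fails to provide.
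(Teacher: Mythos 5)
Your overall strategy is the same as the paper's: the coercivity $(1-|a|)E\leq E_a$ plus monotonicity of $E_a$ along its gradient flow gives the uniform $L^2$ bound on $Tf$; a Bochner-type inequality $(\del_t-\Delta)e(f)\leq Ce(f)$ feeds into Moser's parabolic Harnack inequality to give a uniform $C^0$ bound on $|Tf|$; and this yields both long time existence and, after higher-order parabolic estimates, subsequential smooth convergence. Your endgame differs harmlessly from the paper's: you use the integrated energy identity to find times $t_k$ with $\|\tau_a(f_{t_k})\|_{L^2}\to 0$ and pass to the limit by Arzel\`a--Ascoli, whereas the paper applies Moser's inequality a second time, to $|\tau_a|^2$ (which satisfies $(\del_t-\Delta)|\tau_a|^2\leq C|\tau_a|^2$), obtaining pointwise decay $|\tau_a|\to 0$ from the monotone convergence of $E_a(t)$; both are valid ways to conclude.

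However, there is a genuine gap in your Bochner step. For an almost K\"ahler target, $d\omega_N=0$ does \emph{not} imply $\N^h\omega_N=0$ (that would be the K\"ahler condition), so although $A$ itself is linear in $Tf$, the chain rule in $\N A$ produces a term schematically of the form $d^*\omega_M*\N^h\omega_N*Tf*Tf$; after pairing with $Tf$, the inequality you must establish contains a term bounded by $C|Tf\wedge Tf||Tf|$, which is cubic in $|Tf|$ and contains no derivative of $Tf$. This term cannot be absorbed into $-|\N Tf|^2+Ce(f)$ by Cauchy--Schwarz, as you claim: Young's inequality turns it into $\frac{C\epsilon}{2}|Tf\wedge Tf|^2+\frac{C}{2\epsilon}|Tf|^2$, and the quartic piece $|Tf\wedge Tf|^2$ can only be absorbed by the curvature term, which satisfies $-\langle Q,Tf\rangle\leq K|Tf\wedge Tf|^2-R|Tf|^2$ with $K<0$ by hypothesis; one then chooses $\epsilon$ so small that $\frac{C\epsilon}{2}+K\leq 0$. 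This is precisely where the strict negativity of the sectional curvature enters (and why the paper remarks that only for integrable $J_N$, i.e.\ genuinely K\"ahler targets, can the assumption be relaxed to non-positive curvature): as written, your absorption scheme establishes the differential inequality only when $\N^h\omega_N=0$. The fix is available within your own hypotheses, but the role of negative curvature is to kill this cubic term, not merely to give the curvature term $\langle Q,Tf\rangle$ a favorable sign.
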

The question is then whether or not this result can be used as a stepping stone toward the existence of $E_+$ critical maps. One possible way of getting around the non-coerciveness of $E_+$ is to take a sequence $f_i$ of $E_{a_i}$ critical maps with $a_i\uparrow 1$ and extract appropriate subsequences which converge to $\delbar$-harmonic maps, but we leave this question to future work.

An additional case of interest is when $M=\Sigma$ is a compact Riemann surface and $N$ is an arbitrary compact almost Hermitian manifold, not necessarily almost K\"ahler. This is the critical dimension for the functionals we consider, and just like the harmonic map energy, $E_+$ is conformally invariant in this case. Understanding $\delbar$-harmonic maps in this case is particularly important given the use of $J$-holomorphic curves in almost Hermitian geometry. We consider examples of the flow in this case as well as prove the existence of $\delbar$-harmonic bubbles at a finite time singularity. 

Specifically, in Section 4 we work through an interesting example of the flow restricted to a family of harmonic tori $f:T^2\to S^3\times S^1$ inside a Hopf surface. This family is parameterized by orthonormal pairs in $\arr^4$ and we show that the flow both preserves this family, restricting to an ODE, and converges to a holomorphic or anti-holomorphic map at infinite time. 

Under the assumption of a uniform energy bound we also prove the following bubbling theorem for finite time singularities to the $\delbar$-harmonic map heat flow.
\begin{thrm}
Let $f_0:\Sigma\to N$ be a smooth map of a compact Riemann surface into a compact, almost Hermitian manifold $N$. Suppose that the $\delbar$-harmonic map heat flow beginning with $f_0$ exists on a maximal time interval $[0,T)$ where $T<\infty$ and there is a uniform energy bound $E(f)<C$ along a solution to the flow. Then there is a non-constant $\delbar$-harmonic map $\theta:S^2\to N$.
\end{thrm}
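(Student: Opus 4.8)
The plan is to adapt Struwe's analysis of the harmonic map heat flow on surfaces to the present flow, exploiting that the extra term $A$ in $\tau_+=\tau+A$ is of lower order. The starting point is the gradient-flow structure: since $\partial_t f=\tau_+(f)$ is the negative $L^2$-gradient of $E_+$, the energy $E_+$ is monotone nonincreasing, and integrating $\frac{d}{dt}E_+(f)=-\int_\Sigma|\partial_t f|^2\,dV$ gives the global bound $\int_0^T\int_\Sigma|\partial_t f|^2\,dV\,dt\le E_+(f_0)<\infty$. Because $|Tf+J_NTfJ_M|\le 2|Tf|$, the hypothesis $E(f)<C$ also bounds $E_+(f)$ uniformly, and in two dimensions both $E$ and $E_+$ are conformally invariant. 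From the obstruction to long-time existence proved above, $T<\infty$ forces $\lim_{t\to T}|Tf|_{C^0}=\infty$.

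First I would establish an $\epsilon$-regularity theorem for the flow. Computing the Bochner identity for $|Tf|^2$ along $\partial_t f=\tau_+(f)$ produces, as in the harmonic case, an inequality of the form $(\partial_t-\Delta)|Tf|^2\le C(|Tf|^4+|Tf|^2)$; the additional contributions coming from $A$ involve $\omega_M$, $d^*\omega_M$, $d\omega_N$ and $\N d\omega_N$, all of which are bounded because $M$ and $N$ are compact, so they are absorbed into the lower-order terms. A parabolic Moser iteration then yields a threshold $\epsilon_0>0$ and a radius $r_0>0$ such that $\sup_{t}E(f(t);B_{2r}(x))<\epsilon_0$ with $r<r_0$ implies an interior bound on $\sup|Tf|$ on the smaller parabolic cylinder. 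Consequently $|Tf|$ can blow up only where at least $\epsilon_0$ of energy concentrates; by the uniform energy bound this happens at a finite set of points $x_1,\dots,x_k$ with $k\le C/\epsilon_0$ as $t\to T$.

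Next I would carry out the blow-up at a concentration point $x_0$. Choose $(x_i,t_i)\to(x_0,T)$ with $\lambda_i:=|Tf(x_i,t_i)|\to\infty$ realizing the supremum of $|Tf|$ on a shrinking parabolic neighborhood, and, using $\int_0^T\int_\Sigma|\partial_tf|^2<\infty$, arrange the time slices so that $\partial_t f\to 0$ in $L^2_{\mathrm{loc}}$ along the sequence. Working in conformal coordinates $g=e^{2\phi}\delta$ near $x_0$, the critical-point operator takes the form $\tau_+^i=e^{-2\phi}\big[\Delta_0 f^i+\Gamma^i_{jk}(f)\N f^j\cdot\N f^k+\tfrac12\epsilon^{\ga\dg}f^j_\ga f^k_\dg(d\omega_N)_{ljk}h^{li}\big]$, in which the $d^*\omega_M$ piece of $A$ has dropped out because $d^*\omega_M=0$ on a surface. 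Defining $\tilde f_i(x,t)=f(x_i+\lambda_i^{-1}x,\,t_i+\lambda_i^{-2}t)$, every term inside the bracket is either quadratic in first derivatives or the flat Laplacian, so all scale identically and the bracket equation is preserved verbatim under the rescaling, while the conformal factor converges to the constant $e^{-2\phi(x_0)}$. This is the crucial point: unlike terms that involve extra source derivatives, the correction $A$ survives the parabolic blow-up, so the limit is genuinely $\delbar$-harmonic rather than merely harmonic.

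By $\epsilon$-regularity the $\tilde f_i$ have uniformly bounded derivatives on compact subsets of $\arr^2\times(-\infty,0]$ away from finitely many points, so after passing to a subsequence $\tilde f_i\to\theta$ in $C^2_{\mathrm{loc}}$. The bound $\partial_t f\to 0$ in $L^2_{\mathrm{loc}}$ forces $\partial_t\theta\equiv 0$, whence $\theta:\arr^2\to N$ is a static solution $\tau_+(\theta)=0$, i.e. a $\delbar$-harmonic map for the flat source; it is non-constant since the normalization gives $|T\theta(0)|=1$, and it has finite energy $E(\theta)\le C$ by the uniform bound and Fatou. The hardest part of the argument is twofold: proving the $\epsilon$-regularity estimate with the first-order term $A$ present, and establishing a removable-singularity theorem for finite-energy $\delbar$-harmonic maps from the punctured disc. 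For the latter I would mimic the Sacks--Uhlenbeck argument, noting that $A$ is a genuinely lower-order perturbation of the harmonic map operator, so the same $\epsilon$-regularity together with the conformal invariance of $E_+$ on surfaces shows that the isolated singularity of $\theta$ is removable. Applying this at the puncture compactifies $\theta$ to a non-constant $\delbar$-harmonic map $\theta:S^2\to N$, as claimed.
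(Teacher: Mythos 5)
Your proposal is correct in outline and its core blow-up step is essentially the paper's: both arguments rescale parabolically at points realizing the supremum of $|Tf|$, use the scale-invariance in two dimensions of the spacetime integral $\int\int|\partial_t f|^2$ (equivalently, the monotonicity of $E_+$ along its own gradient flow) to force the limit to be static, and note that the quadratic term coming from $d\omega_N$ scales exactly like the harmonic-map nonlinearity, so the bubble is genuinely $\delbar$-harmonic and non-constant with finite energy by the hypothesis $E(f)<C$. Where you diverge from the paper is in how regularity and the removable singularity at infinity are obtained. The paper observes that on a surface $E_+(f)=\frac{1}{2}\int_\Sigma|Tf|^2dV-\int_\Sigma f^*\omega_N$ is precisely in the class of conformally invariant functionals covered by Rivi\`ere's theorem on conservation laws, and quotes that theorem to get smoothness of weak solutions and, in particular, removability of the point singularity after stereographic projection; this makes the passage from a finite-energy $\delbar$-harmonic plane to a $\delbar$-harmonic sphere almost immediate. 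You instead propose to prove an $\epsilon$-regularity theorem for the flow and to redo the Sacks--Uhlenbeck removable-singularity argument by hand. That route is viable (the equation has the form $\Delta u=Q(u)(\N u,\N u)$ with bounded coefficients, so the classical machinery applies), and it is more self-contained, but it is also where your sketch is thinnest, and your repeated description of $A$ as ``lower order'' is misleading at exactly this point: once $d^*\omega_M=0$ on the surface, $A$ is \emph{critically} quadratic in $Tf$, the same order as the Christoffel term, which is precisely why the paper reaches for Rivi\`ere's structural result rather than treating $A$ as a perturbation. Two smaller fixable imprecisions: the $\epsilon$-regularity/concentration-set analysis is not actually needed for the conclusion (rescaling at the global maximum of $|Tf|$ already gives the uniform gradient bounds required for $C^2_{loc}$ compactness, which is all the paper uses); and one cannot ``arrange'' the blow-up times so that $\partial_t f\to 0$ in $L^2_{loc}$ --- rather, the vanishing of $\int\int|\partial_{\tilde t}\tilde f_i|^2$ over fixed rescaled cylinders is automatic, since it equals the integral of $|\partial_t f|^2$ over shrinking spacetime cylinders ending at $t_i\to T$, which tends to zero by absolute continuity of the finite spacetime integral.
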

\subsection*{Acknowledgments.}
The author would like to thank Jeff Streets for many helpful conversations.
\section{Background and Variation Computations}
In this section we will give a brief overview of some calculations useful in the context of harmonic maps, establish the notation which is used throughout the paper, and derive the first and second variations of the anti-holomorphic energy $E^+$.

In what follows $(M,g,J_M)$ and $(N,h,J_N)$ are almost Hermitian manifolds, assumed complete and without boundary. When writing some object in local coordinates we reserve Greek indices $\alpha,\beta,\ldots$ for coordinates on $M$ and Roman indices $i,j,\ldots$ for coordinates on $N$. We will also use the standard summation convention unless stated otherwise, and will often abbreviate coordinate derivatives of functions with subscripts: $\partial_\ag u=u_\ag$. 

Let $f:M\times I\to N$ be a smooth one-parameter family of maps defined on some open interval $I$. For each $t\in I$, let $Tf:M\to N$ be the derivative of the map $f(\cdot,t)$, viewed as a section of $T^*M\otimes f(\cdot,t)^{-1}TN$, with $f^{-1}E$ denoting the pullback bundle. In local coordinates we have $Tf=f^i_\ag \partial_i\otimes d^\ag$. Note that $Tf$ can also be viewed as a section of $T^*(M\times I)\otimes f^{-1}TN$ by pre-composing the (full) derivative of the family $f_*:T(M\times I)\to TN$ with the canonical projection to $TM$.

The manifolds $M$, $M\times I$, and $N$ have Levi-Civita connections which induce connections on the various tensor and pullback bundles that we will consider. The symbol $\N$ will denote the full covariant derivative operator with respect to spacial variables only, meaning if $s$ is a section of some bundle over $M\times I$ we pre-compose the full covariant derivative of $s$ with the projection to $TM$, so that $(\N s)^i_\ag=s^i_\alpha+s^jf^k_\ag\Gamma_{kj}^i$ in coordinates. We reserve $\N_t$ for the covariant derivative in the direction of $\partial_t$. Let $v=f^i_t\partial_i$ denote the variational vector field of $f$, which in various notations can be written
\begin{align*}
v=\frac{\partial f}{\partial t}=f_*(\partial_t).
\end{align*}
By unwinding the notation we have:
\begin{lemma}
Let $f$ be a smooth one-parameter family of maps with variational vector field $v$. Then
\begin{align*}
\N_tTf=\N v.
\end{align*}
\end{lemma}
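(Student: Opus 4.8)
The plan is to prove the identity $\N_t Tf = \N v$ by a direct local coordinate computation, exploiting the symmetry of the Levi-Civita connection on $N$ (the torsion-free condition) together with the commutativity of the coordinate partial derivatives $\partial_t$ and $\partial_\ag$. The key conceptual point is that both sides are sections of the same bundle $f^{-1}TN$ (with $Tf$ viewed as a $T^*M$-valued section), and the equality expresses the fact that covariant differentiation in the $t$-direction of the spatial derivative agrees with spatial covariant differentiation of the time derivative, because there is no curvature obstruction coming from $M\times I$ in the relevant mixed term---only the target connection coefficients $\Gamma^i_{jk}$ appear, and those enter symmetrically.

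First I would write out both sides in the coordinates established in the excerpt. Recall $Tf = f^i_\ag\,\partial_i\otimes d^\ag$ and $v = f^i_t\,\partial_i$. For the left-hand side, $\N_t$ is the pullback connection applied in the $\partial_t$ direction to the section $Tf$ of $T^*M\otimes f^{-1}TN$; since the $T^*M$ factor is independent of $t$, this acts only on the $f^{-1}TN$ part, yielding in coordinates
\begin{align*}
(\N_t Tf)^i_\ag = \partial_t f^i_\ag + \Gamma^i_{jk}(f)\,f^k_t\,f^j_\ag.
\end{align*}
For the right-hand side, using the formula $(\N s)^i_\ag = s^i_\ag + s^j f^k_\ag \Gamma^i_{kj}$ recorded in the excerpt applied to the section $v$ with components $s^i = f^i_t$, I get
\begin{align*}
(\N v)^i_\ag = \partial_\ag f^i_t + \Gamma^i_{kj}(f)\,f^k_\ag\,f^j_t.
\end{align*}

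Comparing the two expressions, the claim reduces to two elementary facts. The ordinary derivative terms agree because mixed partials commute, $\partial_t f^i_\ag = \partial_t\partial_\ag f^i = \partial_\ag\partial_t f^i = \partial_\ag f^i_t$. The connection terms agree because $\Gamma^i_{jk}$ is symmetric in its lower indices $j,k$ (the Levi-Civita connection on $N$ is torsion-free), so $\Gamma^i_{jk} f^k_t f^j_\ag = \Gamma^i_{kj} f^k_\ag f^j_t$ after relabeling the summed indices. This completes the identification. I do not anticipate a genuine obstacle here; the only point requiring care is bookkeeping---making sure that $\N_t$ and $\N$ are each interpreted as pre-composing the full covariant derivative on $M\times I$ with the appropriate projection, as specified in the excerpt, so that the single cross-term involving the target Christoffel symbols is the only contribution beyond the partial derivative. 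Once the conventions are pinned down, the result is an instance of the standard symmetry-of-the-second-fundamental-form (or torsion-freeness) argument, and the proof is essentially immediate.
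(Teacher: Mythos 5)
Your proof is correct and is precisely the computation the paper has in mind: the paper states this lemma with only the remark ``by unwinding the notation,'' and your coordinate calculation---identifying the cross-term $\Gamma^i_{jk}f^j_\ag f^k_t$ on both sides via commutativity of mixed partials and symmetry of the target Christoffel symbols---is exactly that unwinding, carried out explicitly.
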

We next recall the derivation of the tension tensor $\tau$ as the Euler-Lagrange equation of the energy functional. Given a map $f:M\to N$, let its energy density be $e(f)=\frac{1}{2}|Tf|^2$, where in coordinates
\begin{align*}
|Tf|^2=g^{\ag\bg}f^i_\ag f^j_\bg h_{ij}.
\end{align*}
The energy of $f$ is then the integral $E(f)=\int_M e(f)dV_g$ of the energy density.
\begin{prop}
Let $f$ be a smooth one parameter family of maps whose variational vector field $v$ has compact support. Then
\begin{align*}
\frac{d}{dt}E(f)=\int_M\langle \N v,Tf\rangle dV_g=-\int_M\langle v,\tau\rangle dV_g,
\end{align*}
where $\tau=tr_g\N Tf$ is the tension tensor of $f$, given in local coordinates by
\begin{align*}
\tau^i=g^{\ag\bg}(f^i_{\ag\bg}-f^i_\ga\Gamma^\ga_{\ag\bg}+f^j_\ag f^k_\bg \Gamma^i_{jk}).
\end{align*}
\begin{proof}
The first equality is immediate from the previous lemma by differentiating under the integral, while the second follows from the divergence theorem applied to the vector field $X=\langle v,Tf\rangle^\sharp$.
\end{proof}
\end{prop}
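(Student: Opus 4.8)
The plan is to differentiate the energy density pointwise in $t$, invoke the preceding lemma to convert the $t$-derivative into a spatial covariant derivative of the variation field, and then integrate by parts. Writing $e(f)=\frac{1}{2}\langle Tf,Tf\rangle$ with the inner product taken using $g^{\ag\bg}$ on the $T^*M$ factor and the pullback of $h$ on the $f^{-1}TN$ factor, I would first note that the metric on $M$ and the volume form $dV_g$ are independent of $t$, so that differentiating under the integral sign is legitimate once $v$ is compactly supported. Since the induced connection is compatible with the metric on $T^*M\otimes f^{-1}TN$, I get $\partial_t\langle Tf,Tf\rangle=2\langle\N_tTf,Tf\rangle$, and the previous lemma replaces $\N_tTf$ by $\N v$. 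This yields the first claimed equality $\frac{d}{dt}E(f)=\int_M\langle\N v,Tf\rangle\,dV_g$.

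For the second equality I would integrate by parts against the tension. The clean way is to introduce the vector field $X$ dual to the one-form $\langle v,Tf\rangle$ on $M$, that is $X^\ag=g^{\ag\bg}h_{ij}v^if^j_\bg$, which is compactly supported because $v$ is. A Leibniz expansion using metric compatibility gives the pointwise identity $\mathrm{div}_gX=\langle\N v,Tf\rangle+\langle v,\mathrm{tr}_g\N Tf\rangle=\langle\N v,Tf\rangle+\langle v,\tau\rangle$, where the trace contracts the two $T^*M$ indices of $\N Tf$ with $g^{\ag\bg}$. Applying the divergence theorem kills the left-hand side and produces $\int_M\langle\N v,Tf\rangle\,dV_g=-\int_M\langle v,\tau\rangle\,dV_g$, as desired.

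It then remains to confirm the coordinate formula for $\tau=\mathrm{tr}_g\N Tf$. Here I would write $Tf=f^i_\ag\,\partial_i\otimes d^\ag$ and compute the full covariant derivative of this section, remembering that both factors contribute Christoffel symbols: the $M$-connection acts on the $d^\ag$ factor and the pulled-back $N$-connection acts on the $\partial_i$ factor. This gives $(\N Tf)^i_{\ag\bg}=f^i_{\ag\bg}-f^i_\ga\Gamma^\ga_{\ag\bg}+f^j_\ag f^k_\bg\Gamma^i_{jk}$, and contracting with $g^{\ag\bg}$ reproduces the stated expression for $\tau^i$.

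The only genuinely delicate points here are bookkeeping rather than conceptual. I expect the place to be most careful is the identification of the induced connection $\N$ on the tensor product bundle $T^*M\otimes f^{-1}TN$ together with the verification that it is metric compatible, since this is exactly what licenses both the step $\partial_t\langle Tf,Tf\rangle=2\langle\N_tTf,Tf\rangle$ and the divergence identity for $X$. Once the previous lemma $\N_tTf=\N v$ is in hand, the remainder is a routine integration by parts and an index computation.
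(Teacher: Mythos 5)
Your proposal is correct and follows exactly the paper's own argument: differentiate under the integral using metric compatibility and the lemma $\N_tTf=\N v$ for the first equality, then apply the divergence theorem to the compactly supported vector field $X=\langle v,Tf\rangle^\sharp$ for the second. The extra detail you supply (the Leibniz expansion $\mathrm{div}_gX=\langle\N v,Tf\rangle+\langle v,\tau\rangle$ and the coordinate computation of $\mathrm{tr}_g\N Tf$) is just the bookkeeping the paper leaves implicit.
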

The next proposition concerns the second variation of the energy.
\begin{prop}
Let $f$ be a smooth one parameter family of maps whose variational vector field $v$ has compact support. Then
\begin{align*}
\frac{d^2}{dt^2}E(f)=-\int_M\langle \N_t v,\tau\rangle+\langle v, \Delta v + tr_g R^N(v,Tf)Tf\rangle dV_g,
\end{align*}
where $\Delta v=tr_g\N^2 v$.
\begin{proof}
The variation of $\tau$ is given by
\begin{align*}
\N_t\tau=\N_t tr_g\N Tf&=tr_g\N_t\N Tf\\
&=tr_g\N\N_t Tf+tr_gR^N(v,Tf)Tf\\
&=tr_g\N^2v+tr_gR^N(v,Tf)Tf,
\end{align*}
where $R^N(X,Y)=\N_X\N_Y-\N_Y\N_X-\N_{[X,Y]}$ is the curvature tensor of $N$. The result follows from differentiation under the integral of the previous proposition, noting that $\partial_t\langle v,\tau\rangle=\langle \N_t v,\tau\rangle+\langle v,\N_t\tau\rangle$.
\end{proof}
\end{prop}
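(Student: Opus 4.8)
The plan is to differentiate the first variation identity $\deet E(f)=-\int_M\langle v,\tau\rangle\,dV_g$ from the previous proposition a second time. Since the metric $g$ on $M$ is fixed along the family, the volume form $dV_g$ carries no $t$-dependence, so I may differentiate under the integral sign and apply the product rule together with metric compatibility of the induced connection on $f^{-1}TN$ to obtain
\begin{align*}
\frac{d^2}{dt^2}E(f)=-\int_M\langle\N_tv,\tau\rangle+\langle v,\N_t\tau\rangle\,dV_g.
\end{align*}
The first term already appears in the desired expression, so the content of the argument is the computation of $\N_t\tau$.

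I would compute $\N_t\tau$ in two steps. First, because $g^{\ab}$ is independent of $t$, the operator $\N_t$ passes through the metric trace, giving $\N_t\tau=\N_t\,tr_g\N Tf=tr_g\N_t\N Tf$. Next I commute the temporal derivative $\N_t$ past the spatial derivative $\N$. Regarding $Tf$ as a section over $M\times\I$ and using that the coordinate fields $\partial_t$ and $\partial_\ag$ commute, the curvature of the pullback connection on $f^{-1}TN$ contributes a single term,
\begin{align*}
\N_t\N Tf=\N\N_tTf+R^N(v,Tf)Tf,
\end{align*}
where $R^N(X,Y)=\N_X\N_Y-\N_Y\N_X-\N_{[X,Y]}$ and the two slots are filled by the pushforwards $f_*\partial_t=v$ and $f_*\partial_\ag$. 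Invoking the lemma $\N_tTf=\N v$ then replaces $\N_tTf$ by $\N v$, so that $\N\N_tTf=\N^2v$, and taking the $g$-trace yields $\N_t\tau=\Delta v+tr_gR^N(v,Tf)Tf$. Substituting back into the differentiated first variation gives the stated formula.

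The only delicate point is the commutation of $\N_t$ and $\N$. One must verify that the $T^*M$ factor of $Tf$ contributes nothing, since the Levi-Civita connection of the fixed metric $g$ has no curvature in the $(\partial_t,\partial_\ag)$ directions, and correctly record the sign convention for $R^N$ so that the curvature term enters as $+\,tr_gR^N(v,Tf)Tf$. Everything else is a routine application of the $t$-independence of $g$ and the product rule.
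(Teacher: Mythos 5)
Your proposal is correct and follows essentially the same route as the paper: differentiate the first variation under the integral, compute $\N_t\tau$ by passing $\N_t$ through the $g$-trace, commute $\N_t$ past $\N$ to pick up the curvature term $tr_gR^N(v,Tf)Tf$, and invoke the lemma $\N_tTf=\N v$. Your additional remark that the $T^*M$ factor contributes no curvature in the $(\partial_t,\partial_\ag)$ directions is a sound verification of a step the paper leaves implicit, but it does not change the argument.
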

Next we look at what can be done when the (almost) complex structures are taken into account. We first focus on the orthogonal decomposition of $Tf$ and the norm of its various pieces.
\begin{lemma}
Let $f:M\to N$ be a differentiable map between the almost Hermitian manifolds $(M,g,J_M)$ and $(N,h,J_N)$. Then the derivative has an orthogonal decomposition
\begin{align*}
Tf=\frac{1}{2}(Tf+J_NTfJ_M)+\frac{1}{2}(Tf-J_NTfJ_M)
\end{align*}
\begin{proof}
We compute
\begin{align*}
\langle Tf+J_NTfJ_M,Tf-J_NTfJ_M\rangle=|Tf|^2-|J_NTfJ_M|^2=|Tf|^2-|Tf|^2=0.
\end{align*}
\end{proof}
\end{lemma}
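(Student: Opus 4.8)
The stated identity $Tf=\tfrac{1}{2}(Tf+J_NTfJ_M)+\tfrac{1}{2}(Tf-J_NTfJ_M)$ is an algebraic tautology, since the two summands manifestly add up to $Tf$; the entire content of the lemma is the assertion that the decomposition is \emph{orthogonal} with respect to the natural fiber metric on $T^*M\otimes f^{-1}TN$ induced by $g$ and $h$. The plan is therefore to verify that the two summands are pointwise orthogonal, which, since the scalar factor $\tfrac{1}{2}$ is irrelevant to orthogonality, reduces to showing
\begin{align*}
\langle Tf+J_NTfJ_M,\ Tf-J_NTfJ_M\rangle=0.
\end{align*}

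First I would expand this pairing as a difference of squares. Writing $a=Tf$ and $b=J_NTfJ_M$, the fiber metric is a real symmetric bilinear form, so the cross terms $-\langle a,b\rangle+\langle b,a\rangle$ cancel and one is left with $\langle a+b,a-b\rangle=|a|^2-|b|^2=|Tf|^2-|J_NTfJ_M|^2$. It then remains to establish the single identity $|J_NTfJ_M|^2=|Tf|^2$.

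This last step is where the almost Hermitian hypothesis enters, and it is the only substantive point. Compatibility of the metrics with the complex structures gives $g(J_MX,J_MY)=g(X,Y)$ and $h(J_NU,J_NV)=h(U,V)$, so $J_M$ and $J_N$ act as pointwise isometries; together with $J_M^2=-\mathrm{id}$ and $J_N^2=-\mathrm{id}$ this forces pre- and post-composition by the complex structures to preserve the Hilbert--Schmidt norm of $Tf$ as a section of $T^*M\otimes f^{-1}TN$. Concretely, using the coordinate expression $|S|^2=g^{\ab}h_{ij}S^i_\ag S^j_\bg$ one computes the norm of $J_NTfJ_M$ and absorbs the two isometries; more invariantly, one notes $|J_NTfJ_M|^2=\mathrm{tr}\big((J_NTfJ_M)^\ast J_NTfJ_M\big)=\mathrm{tr}\big((Tf)^\ast Tf\big)=|Tf|^2$ by cyclicity of the trace and $J^\ast J=\mathrm{id}$. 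Substituting $|J_NTfJ_M|^2=|Tf|^2$ into the previous paragraph yields $0$, establishing orthogonality. I do not anticipate any genuine obstacle; the computation is short, and its role is simply to record that the splitting of $Tf$ into its $J$-commuting and $J$-anticommuting parts is orthogonal, which is precisely what legitimizes the definition of $E_+$ as the squared norm of the anti-holomorphic component $\tfrac{1}{2}(Tf+J_NTfJ_M)$.
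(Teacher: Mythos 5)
Your proof is correct and follows the same route as the paper: expand $\langle Tf+J_NTfJ_M,\,Tf-J_NTfJ_M\rangle$ into $|Tf|^2-|J_NTfJ_M|^2$ and use that $J_M$, $J_N$ are pointwise isometries to conclude this vanishes. The only difference is that you spell out the justification for $|J_NTfJ_M|=|Tf|$, which the paper leaves implicit.
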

\begin{lemma}
With the same assumptions of the previous lemma, we have
\begin{align*}
\frac{1}{4}|Tf+J_NTfJ_M|^2=\frac{1}{2}|Tf|^2+\frac{1}{2}\langle Tf,J_NTfJ_M\rangle
\end{align*}
and
\begin{align*}
\langle Tf,J_NTfJ_M\rangle=-\langle\omega_M,f^*\omega_N\rangle,
\end{align*}
where the inner product on two-forms has the normalization $\langle a,b\rangle=a_{\ag\bg}b_{\ga\dg}g^{\ag\ga}g^{\bg\dg}$, i.e. is the inner product as real tensors.
In particular, if $M$ is a Riemann surface we have
\begin{align*}
-\frac{1}{2}\langle\omega_M,f^*\omega_N\rangle dV_g=-f^*\omega_N
\end{align*}
\begin{proof}
The first equality comes from expanding the inner product, while the second is done in coordinates:
\begin{align*}
\langle Tf,J_NTfJ_M\rangle=f^i_\ag J^k_jf^j_\bg J^\bg_\ga g^{\ag\ga}h_{ik}&=-J^\bg_\ga g^{\ag\ga}(f^*\omega_N)_{\ag\bg}\\&=-(\omega_M)_{\ga\dg}g^{\ga\ag}g^{\dg\bg}(f^*\omega_N)_{\ag\bg}.
\end{align*}
The proof is completed by noting that on a Riemann surface $|\omega_M|^2=2$ and $dV_g=\omega_M$, so that $\frac{1}{\sqrt{2}}\omega_M$ is an orthonormal basis of $\Omega^2$ at each point and $\langle\omega_M,f^*\omega_N\rangle dV_g=2f^*\omega_N$.
\end{proof}
\end{lemma}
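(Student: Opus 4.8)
The plan is to establish the three displayed identities in turn, reducing everything to the pointwise linear algebra of the compatible structures.

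For the first identity I would expand the norm of the sum,
\[
|Tf+J_NTfJ_M|^2=|Tf|^2+2\langle Tf,J_NTfJ_M\rangle+|J_NTfJ_M|^2,
\]
and then invoke that $J_M$ and $J_N$ act as fiberwise isometries of $(TM,g)$ and $(TN,h)$ --- precisely the observation $|J_NTfJ_M|^2=|Tf|^2$ already used in the proof of the previous lemma. Substituting this and dividing by four gives the claim with no further work.

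For the second identity I would pass to the local coordinate frame used throughout, writing $Tf=f^i_\ag\partial_i\otimes d^\ag$ so that $(J_NTfJ_M)^i_\ag=J^i_kf^k_\bg J^\bg_\ag$, where $J^i_k$ and $J^\bg_\ag$ are the component matrices of $J_N$ and $J_M$. Contracting with the natural inner product on $T^*M\otimes f^{-1}TN$ produces
\[
\langle Tf,J_NTfJ_M\rangle=g^{\ag\ga}h_{ik}f^i_\ag J^k_j f^j_\bg J^\bg_\ga.
\]
The identity then follows from two elementary conversions. The skew-adjointness of $J_N$ with respect to $h$, namely $h(J_NX,Y)=-h(X,J_NY)$, gives $h_{ik}J^k_j=-(\omega_N)_{ij}$, which turns the target contraction into the pullback $-(f^*\omega_N)_{\ab}$; and lowering then re-raising with $g$ rewrites $J^\bg_\ga g^{\ag\ga}$ as $(\omega_M)_{\ga\dg}g^{\ga\ag}g^{\dg\bg}$. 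Collecting terms leaves exactly $-(\omega_M)_{\ga\dg}(f^*\omega_N)_{\ab}g^{\ga\ag}g^{\dg\bg}$, which by the stated tensor normalization $\langle a,b\rangle=a_{\ab}b_{\ga\dg}g^{\ag\ga}g^{\bg\dg}$ of the inner product on two-forms equals $-\langle\omega_M,f^*\omega_N\rangle$ after relabeling the contracted indices. For the surface statement I would use that when $\dim_\arr M=2$ the bundle of two-forms has rank one, with $dV_g=\omega_M$ and $|\omega_M|^2=2$, so that $\frac{1}{\sqrt2}\omega_M$ is a pointwise orthonormal frame. Expanding $f^*\omega_N$ in this frame gives $f^*\omega_N=\frac{1}{2}\langle\omega_M,f^*\omega_N\rangle\,\omega_M=\frac{1}{2}\langle\omega_M,f^*\omega_N\rangle\,dV_g$, and multiplying by $-1$ yields the displayed equation.

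I expect no genuine obstacle here, since the content is pointwise linear algebra. The only point demanding care is the bookkeeping of signs and index placement, in particular tracking that the lone minus sign in the second identity originates entirely from the skew-adjointness of $J_N$ encoded in the convention $\omega_N(X,Y)=h(J_NX,Y)$; a different sign or ordering convention for $\omega$ would propagate through the computation and must be kept consistent with the chosen normalization of the two-form inner product.
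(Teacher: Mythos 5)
Your proposal is correct and follows essentially the same route as the paper: expand the square and use that $J_M$, $J_N$ are isometries for the first identity, compute $\langle Tf,J_NTfJ_M\rangle$ in coordinates via the skew-adjointness relation $h_{ik}J^k_j=-(\omega_N)_{ij}$ and the index-lowering identity $J^\bg_\ga g^{\ag\ga}=(\omega_M)_{\ga\dg}g^{\ga\ag}g^{\dg\bg}$ for the second, and expand $f^*\omega_N$ in the pointwise orthonormal frame $\frac{1}{\sqrt 2}\omega_M$ with $dV_g=\omega_M$ for the surface case. Your sign bookkeeping is consistent with the paper's convention $\omega(X,Y)=g(JX,Y)$ and with its stated normalization of the inner product on two-forms, so there is nothing to add.
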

The remark about the normalization of the inner product of forms is necessary because it is not the convention used in much of the literature. For example, if $M$ is $2m$ dimensional we have $|\omega_M|^2=2m$, while other authors would say $|\omega_M|^2=m$. We note that the volume form is still given by $dV_g=\frac{1}{m!}\omega_M^m$ and that our normalization does not change the adjoint $d^*$ of the exterior derivative.

Our next proposition concerns how the pullback $f^*\omega$ of a two-form varies with a variation of $f$ and is necessary for computing the first variation of $E^+$. If $f$ is a one parameter group of diffeomorphisms of $M$, the following is nothing more than the familiar Cartan formula $\mathcal{L}_X\omega=d\iota_X\omega+\iota_Xd\omega$, but for arbitrary smooth families of maps $f:M\to N$ the generalization we give is perhaps unknown to the reader.
\begin{prop}
Let $f:M\times I\to N$ be a smooth one parameter family of maps with $\partial_tf=v$. Fix a differential two-form $\omega$ on $N$, and let $f^*\omega$ be the pullback of $\omega$ with respect to the map $f(t):M\to N$. Then as a one parameter family of forms on $M$ we have
\begin{align*}
\frac{d}{dt}f^*\omega=df^*\iota_v\omega+f^*\iota_vd\omega.
\end{align*}
\end{prop}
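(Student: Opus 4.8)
The plan is to lift the whole family to the product manifold $M\times I$ and read off the identity from the naturality of the exterior derivative, $dF^*\omega=F^*d\omega$, where $F=f:M\times I\to N$ is the family regarded as a single smooth map. This is the coordinate-free analogue of deriving Cartan's formula from $\mathcal{L}_X=d\iota_X+\iota_Xd$, and it sidesteps the messier route of differentiating $f^*\omega=\frac12\omega_{ij}(f)f^i_\ag f^j_\bg\,dx^\ag\wedge dx^\bg$ directly in coordinates.

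First I would set up the spatial/temporal decomposition of forms on $M\times I$. Every form $\gamma$ on $M\times I$ is uniquely $\gamma=\gamma^{(0)}+dt\wedge\gamma^{(1)}$ with $\gamma^{(0)},\gamma^{(1)}$ free of $dt$; concretely $\gamma^{(1)}=\iota_{\del_t}\gamma$ and $\gamma^{(0)}$ is the restriction to each slice $M\times\{t\}$. Applying this to $\gamma=F^*\omega$, restriction to the slice gives $\gamma^{(0)}=f(t)^*\omega$, while the contraction identity $\iota_{\del_t}F^*\eta=f(t)^*\iota_v\eta$ on spatial vectors — valid because $F_*$ restricted to $TM$ is the spatial differential of $f(t)$ and $F_*\del_t=v$ — gives $\gamma^{(1)}=f(t)^*\iota_v\omega$.

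Next I would record how $d$ on $M\times I$ interacts with this splitting: using $d=d_M+dt\wedge\del_t$ on $dt$-free forms together with $d(dt\wedge\gamma^{(1)})=-dt\wedge d_M\gamma^{(1)}$, the coefficient of $dt$ in $d\gamma$ is exactly $\del_t\gamma^{(0)}-d_M\gamma^{(1)}$, i.e. $\iota_{\del_t}d\gamma=\del_t\gamma^{(0)}-d_M\gamma^{(1)}$. Contracting both sides of $dF^*\omega=F^*d\omega$ with $\del_t$ and invoking the contraction identity a second time, now for the three-form $d\omega$, yields
\begin{align*}
\del_t(f^*\omega)-d_M(f^*\iota_v\omega)=\iota_{\del_t}F^*d\omega=f^*(\iota_vd\omega),
\end{align*}
and rearranging gives the asserted $\del_t(f^*\omega)=d(f^*\iota_v\omega)+f^*(\iota_vd\omega)$.

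The only real obstacle is careful bookkeeping rather than any analytic difficulty. One must keep the spatial exterior derivative $d_M$ (which is the $d$ appearing in the statement) distinct from the exterior derivative on $M\times I$, handle the sign in $d(dt\wedge\gamma^{(1)})=-dt\wedge d_M\gamma^{(1)}$ correctly, and justify the two uses of $\iota_{\del_t}F^*\eta=f^*\iota_v\eta$ — the subtle point being that $\iota_v\eta$ is a form along the map (a section of $f^{-1}\Lambda^\bullet T^*N$) whose pullback is well defined. Once these identifications are in place the result is a one-line consequence of $dF^*=F^*d$ and multilinear algebra on the product.
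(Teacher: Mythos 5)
Your proof is correct, but it takes a genuinely different route from the paper. The paper's proof is a direct coordinate computation: it expands $\bigl(\tfrac{d}{dt}f^*\omega\bigr)_{\alpha\beta}=v^i_\alpha f^j_\beta\omega_{ij}+f^i_\alpha v^j_\beta\omega_{ij}+f^i_\alpha f^j_\beta\omega_{ij,k}v^k$, writes out $(f^*\iota_v d\omega)_{\alpha\beta}$ and $(df^*\iota_v\omega)_{\alpha\beta}$ in the same index notation, and observes that the sum of the latter two reproduces the former term by term. Your argument instead lifts the family to a single map $F:M\times I\to N$, splits forms on the product as $\gamma=\gamma^{(0)}+dt\wedge\gamma^{(1)}$, and extracts the identity by contracting $dF^*\omega=F^*d\omega$ with $\partial_t$; I checked the three ingredients you rely on (the slice identification $\gamma^{(0)}=f(t)^*\omega$, the contraction identity $\iota_{\partial_t}F^*\eta=f^*\iota_v\eta$ applied to both $\omega$ and $d\omega$, and the formula $\iota_{\partial_t}d\gamma=\partial_t\gamma^{(0)}-d_M\gamma^{(1)}$ with its sign) and all are sound. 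What your route buys: it is coordinate-free, works verbatim for forms of any degree rather than just two-forms, makes the analogy with Cartan's magic formula structural rather than notational, and reduces the proof to naturality of $d$ plus linear algebra. What the paper's route buys: it is entirely elementary and self-contained (no product-manifold formalism, no forms-along-a-map), and it produces exactly the local coordinate expressions that the paper immediately reuses in the first variation of $K$ and in the definition of the tensor $A$, which is presumably why the author chose it.
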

\begin{proof}
We compute directly in coordinates
\begin{align*}
(\frac{d}{d t}f^*\omega)_{\alpha\beta} = v^i_\alpha f^j_\beta\omega_{ij}+f^i_\alpha v^j_\beta\omega_{ij}+f^i_\alpha f^j_\beta\omega_{ij,k}v^k.
\end{align*}
On one hand we have
\begin{align*}
(d\omega)_{ijk}=\omega_{ij,k}+\omega_{ki,j}+\omega_{jk,i},
\end{align*}
\begin{align*}
(\iota_vd\omega)_{ij}=v^k(\omega_{ij,k}+\omega_{ki,j}+\omega_{jk,i}),
\end{align*}
and finally
\begin{align*}
(f^*\iota_vd\omega)_{\alpha\beta}=f^i_\alpha f^j_\beta v^k(\omega_{ij,k}+\omega_{ki,j}+\omega_{jk,i}).
\end{align*}
While on the other hand,
\begin{align*}
(\iota_v\omega)_i=v^j\omega_{ji},
\end{align*}
\begin{align*}
(f^*\iota_v\omega)_\alpha=f^i_\alpha v^j\omega_{ji},
\end{align*}
and
\begin{align*}
(df^*\iota_v\omega)_{\alpha\beta}=f^j_\beta v^i_\alpha\omega_{ij}-f^j_\beta v^k \omega_{jk,i}f^i_\alpha+f^i_\alpha v^j_\beta\omega_{ij}-f^i_\alpha v^k\omega_{ki,j}f^j_\beta.
\end{align*}
Adding these gives the result.
\end{proof}
Now, given a smooth of map $f:M\to N$ between almost Hermitian manifolds, the anti-holomorphic energy $E_+$ and holomorphic energy $E_-$ of $f$ decompose
\begin{align*}
E_\pm(f)=\frac{1}{4}\int_M|Tf\pm J_NTfJ_M|^2dV_g=E(f)\pm K(f)
\end{align*}
into a sum of the standard energy $E(f)=\frac{1}{2}\int_M|Tf|^2dV_g$ and an additional term
\begin{align*}
K(f)=-\frac{1}{2}\int_M\langle \omega_M,f^*\omega_N\rangle dV_g.
\end{align*}
Some obvious relations between these functionals are:
\begin{align*}
E&=\frac{1}{2}(E_++E_-)\\
K&=\frac{1}{2}(E_+-E_-).
\end{align*}
These belong to a family $E_a$ of energies which will be considered in a later section. These are given by
\begin{align*}
E_a=aE_++(1-a)E=E+aK.
\end{align*}
We next give the first variation of $K$. Remarkably, its Euler-Lagrange equation depends only on first derivatives of $f$.
\begin{prop}
Let $K=K(f)=-\frac{1}{2}\int_M\langle\omega_M,f^*\omega_N\rangle dV_g$ be the difference $E_+-E$ between the Dirichlet energy $E(f)=\frac{1}{2}\int_M|Tf|^2dV_g$ and the anti-holomorphic energy $E_+(f)=\frac{1}{4}\int_M|Tf+J_NTfJ_M|^2dV_g$. Let $v=\partial_t f$ be a variation of $f$ with compact support. Then
\begin{align*}
\frac{d}{dt}K(f) = -\int_M\langle v, A\rangle dV_g
\end{align*}
where $A$ is given by
\begin{align*}
2A = \langle d^*\omega_M,f^\times\omega_N\rangle^\sharp+\langle\omega_M,f^\times d\omega_N\rangle^\sharp,
\end{align*}
$f^\times\omega$ denotes the pullback of a form $\omega$ on all indices except for the first, and $\sharp:T^*\to T$ is the musical isomorphism given by the metric.
\end{prop}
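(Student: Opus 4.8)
The plan is to differentiate $K$ directly and convert the result into the advertised form using the generalized Cartan formula of the preceding proposition together with a single integration by parts. Since $v$ has compact support and $\omega_M$ does not depend on $t$, I would first differentiate under the integral sign to get
\[
\frac{d}{dt}K(f)=-\frac12\int_M\Big\langle\omega_M,\frac{d}{dt}f^*\omega_N\Big\rangle\,dV_g,
\]
and then substitute $\frac{d}{dt}f^*\omega_N=d\big(f^*\iota_v\omega_N\big)+f^*\iota_v d\omega_N$ from the previous proposition. This splits the variation into an ``exact'' piece built from $f^*\iota_v\omega_N$ and a genuinely new piece built from $f^*\iota_v d\omega_N$.

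For the exact piece I would integrate by parts, moving $d$ off of $f^*\iota_v\omega_N$ and onto $\omega_M$ as the codifferential $d^*$. The boundary terms vanish because $f^*\iota_v\omega_N$ inherits the compact support of $v$, so
\[
-\frac12\int_M\langle\omega_M,d(f^*\iota_v\omega_N)\rangle\,dV_g=-\frac12\int_M\langle d^*\omega_M,f^*\iota_v\omega_N\rangle\,dV_g .
\]
Here I would invoke the earlier remark that the tensor normalization of the inner product on forms leaves $d^*$ unchanged, so this step is legitimate with the conventions in force. The second piece is already in a usable shape and requires no integration by parts.

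It remains to recognize the two integrands $\langle d^*\omega_M,f^*\iota_v\omega_N\rangle$ and $\langle\omega_M,f^*\iota_v d\omega_N\rangle$ as pairings $\langle v,\cdot\rangle$. Using the coordinate conventions already established, namely $(\iota_v\omega_N)_i=v^j(\omega_N)_{ji}$ and $(\iota_v d\omega_N)_{ij}=v^k(d\omega_N)_{kij}$, the factor $v$ pulls out to the front of each contraction, leaving $(d^*\omega_M)_\alpha g^{\alpha\beta}f^i_\beta(\omega_N)_{ji}$ and $(\omega_M)_{\alpha\beta}g^{\alpha\gamma}g^{\beta\delta}f^i_\gamma f^j_\delta(d\omega_N)_{kij}$ respectively; these are exactly the components of $\langle d^*\omega_M,f^\times\omega_N\rangle$ and $\langle\omega_M,f^\times d\omega_N\rangle$, since $f^\times$ pulls back every slot except the retained target index. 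Applying $\sharp$ to raise that index and reassembling the prefactor $-\tfrac12$ then yields $-\int_M\langle v,A\rangle\,dV_g$ with $2A$ as stated. The step I expect to be the main obstacle is precisely this last bookkeeping: keeping straight which slot of $\omega_N$ and $d\omega_N$ is retained versus pulled back, respecting the index-order conventions inherited from the Cartan computation, and verifying that the musical isomorphism reproduces the claimed coordinate formula $2A^i=(d^*\omega)_\alpha f^j_\beta\omega_{lj}g^{\alpha\beta}h^{li}+\omega_{\alpha\beta}f^j_\gamma f^k_\delta(d\omega)_{ljk}g^{\alpha\gamma}g^{\beta\delta}h^{li}$ rather than a permuted or sign-flipped variant.
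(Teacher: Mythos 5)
Your proposal is correct and follows essentially the same route as the paper's proof: differentiate under the integral via the generalized Cartan formula, integrate by parts to convert $\langle\omega_M,d(f^*\iota_v\omega_N)\rangle$ into $\langle d^*\omega_M,f^*\iota_v\omega_N\rangle$, and then factor out $v$ to identify $A$. In fact the paper stops after the integration by parts and leaves the final index bookkeeping implicit, so your careful verification that the retained slot of $\omega_N$ and $d\omega_N$ matches the stated coordinate formula for $2A^i$ is a welcome completion of that step.
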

\begin{proof}
Given the generalized Cartan formula of the previous proposition we differentiate under the integral to obtain:
\begin{align*}
\frac{d}{dt} K(f)& = -\frac{1}{2}\int_M \langle \omega_M,df^*\iota_v\omega_N+f^*\iota_vd\omega_N\rangle dV_g\\
& = -\frac{1}{2}\int_M \langle d^*\omega_M,f^*\iota_v\omega_N\rangle+\langle\omega_M,f^*\iota_vd\omega_N\rangle dV_g.
\end{align*}
\end{proof}
As a corollary we obtain the aforementioned result of Lichnerowicz in \cite{LN} for harmonic maps between almost K\"ahler manifolds.
\begin{cor}(Lichnerowicz)
If $d\omega_N=0$ and $d^*\omega_M=0$ then $K$ is a smooth homotopy invariant of $f$. Therefore, under these assumptions, the critical points of $E_+$ coincide with the critical points of $E$, i.e. harmonic maps. In particular, a holomorphic map between closed almost K\"ahler manifolds is harmonic and minimizes the energy in its homotopy class.
\end{cor}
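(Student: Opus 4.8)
The entire statement follows by reading off consequences of the first-variation formula for $K$ just established, so the plan is largely bookkeeping. First I would note that both hypotheses make the tensor $A$ vanish \emph{identically}: since $2A=\langle d^*\omega_M,f^\times\omega_N\rangle^\sharp+\langle\omega_M,f^\times d\omega_N\rangle^\sharp$, the assumption $d^*\omega_M=0$ annihilates the first term and $d\omega_N=0$ annihilates the second, for \emph{every} map $f$, not merely at special ones. The previous proposition then gives $\frac{d}{dt}K(f)=0$ along any smooth one-parameter family with compactly supported variation field.

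Next I would upgrade this to homotopy invariance. Because $M$ is closed, a smooth homotopy $F:M\times[0,1]\to N$ from $f_0$ to $f_1$ is precisely such a family, and its variation field $\partial_tF$ has compact support automatically. Integrating $\frac{d}{dt}K(F(\cdot,t))=0$ over $t\in[0,1]$ gives $K(f_0)=K(f_1)$, establishing that $K$ is a smooth homotopy invariant.

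I would then use the decomposition $E_+=E+K$. Since the first variation of $K$ vanishes at every map (equivalently $A\equiv0$), the Euler-Lagrange operator satisfies $\tau_+=\tau+A=\tau$, so $E_+$ and $E$ have identical critical points, namely the harmonic maps. For the final assertion I would recall that $E_+\ge0$ and that $f$ is pseudo-holomorphic exactly when $Tf+J_NTfJ_M=0$, i.e. $E_+(f)=0$; hence such an $f$ is a global minimizer of $E_+$ and in particular critical, so harmonic. Writing $E(f)=E_+(f)-K(f)=-K(f)$ and using $K(g)=K(f)$ for any homotopic $g$, I get $E(g)=E_+(g)-K(f)\ge -K(f)=E(f)$, so $f$ minimizes $E$ in its homotopy class. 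For the stated specialization to almost K\"ahler source one uses that $d\omega_M=0$ forces $\omega_M$ to be co-closed, hence $d^*\omega_M=0$, so the invariance of $K$ indeed applies.

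There is no substantial obstacle here: the real work was all in computing the first variation of $K$. The only point meriting care is the passage from the compactly-supported first-variation identity to genuine homotopy invariance, which relies on $M$ being closed so that every smooth homotopy is an admissible variation; once that is granted the remaining deductions are formal.
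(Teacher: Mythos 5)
Your proof is correct and is exactly the argument the paper intends: the corollary is stated there without a separate proof, as an immediate consequence of the first-variation formula for $K$ — the hypotheses force $A\equiv 0$, hence $K$ is constant along smooth homotopies (using that $M$ is closed), hence $\tau_+=\tau$, and the minimization claim follows from $E=E_+-K$ together with $E_+\geq 0$ and $E_+(f)=0$ for holomorphic $f$. Your closing observation that $d\omega_M=0$ already implies $d^*\omega_M=0$ for the fundamental form of an almost Hermitian structure (since $*\omega_M$ is proportional to $\omega_M^{m-1}$) is a worthwhile addition, as it reconciles the corollary's balanced-source hypothesis with the almost K\"ahler phrasing of its final sentence.
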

We are also equipped to give a proof of Proposition 1.2.
\begin{proof}
Noting that $E^+=E+K$, we combine the first variation of $E$ with the first variation of $K$ to get the result.
\end{proof}
We next compute the second variation of $E_+$ and prove Proposition 1.3 and Corollary 1.4
\begin{proof}[Proof of Proposition 1.3]
From the first variation we have
\begin{align*}
\frac{\partial}{\partial t}E_+=-\int_M\langle v,\tau_+\rangle dV_g.
\end{align*}
We then compute
\begin{align*}
\N_t\tau_+=\N_t\tau+\N_tA& =\N_ttr_g\N Tf+\N_t A\\& = tr_g\N\N_tTf+tr_gR(v,Tf)Tf+\N_t A\\& =\Delta v+tr_gR(v,Tf)Tf+\N_tA,
\end{align*}
therefore
\begin{align*}
\frac{\partial^2}{\partial t^2}|_{t=0}E_+=-\int_M\langle v,\Delta v+tr_gR(v,Tf)Tf+\N_tA\rangle dV_g.
\end{align*}
Finally,
\begin{align*}
\N_t A&=\frac{1}{2}\{ \langle d^*\omega_M,\omega_N(\cdot,\N v)\rangle^\sharp\\& +\langle d^*\omega_M,(\N_v\omega_N)(\cdot,Tf)\rangle^\sharp+\langle\omega_M,(d\omega_N)(\cdot,\N v,Tf)\rangle^\sharp\\& +\langle\omega_M,(d\omega_N)(\cdot,Tf,\N v)\rangle^\sharp\\& +\langle\omega_M,(\N_v d\omega_N)(\cdot,Tf,Tf)\rangle^\sharp\}
\end{align*} accounts for the remaining terms.
\end{proof}

We clarify what is meant by the various inner products in the $\N_t A$ term of the previous proposition by expressing some of them in local coordinates. For example,
\begin{align*}
\langle\omega_M,(\N_vd\omega_N)(\cdot,Tf,Tf)\rangle_i=\omega_{\alpha\beta}(\N_vd\omega_N)_{ijk}f^j_\gamma f^k_\delta g^{\alpha\gamma}g^{\beta\delta}
\end{align*}
and
\begin{align*}
\langle \omega_M,(d\omega_N)(\cdot,Tf,\N v)\rangle_i=\omega_{\alpha\beta}f^j_\gamma f^l_\delta (d\omega)_{ijk}g^{\alpha\gamma}g^{\beta\delta}(\N_l v)^k.
\end{align*}

\begin{proof}[Proof of Corollary 1.4]
Consider maps $f:M\to N$. Note that $E_+(f)\geq 0$ and $E_+(f)=0$ if, and only if, $f$ is (pseudo) holomorphic. Therefore holomorphic maps are stable critical points of $E_+$ as they are global minimizers of this functional. 

Now, the identity map $id:M\to M$ is always holomorphic and is therefore a stable $E_+$ critical point, hence the operator $L$ is non-negative. If $v$ is a holomorphic vector field then it generates a one parameter family of holomorphic maps beginning at the identity and so $Lv=0$.
\end{proof}

\begin{proof}[Proof of Corollary 1.5]
Let $f:M\to \arr$ satisfying $\Delta f=-\lambda f$ be an eigenfunction. Recall the Bochner formula $\Delta\N f=\N\Delta f+Rc(\N f)$. If $(M,g,J)$ is almost K\"ahler then by the previous corollary the operator $L=-\Delta-Rc$ on smooth vector fields is non-negative. We then compute
\begin{align*}
L\N f=-\Delta\N f-Rc(\N f)=-\N\Delta f-2Rc(\N f)=\lambda\N f-2Rc(\N f).
\end{align*}
Thus, since $Rc\geq \alpha$, we have
\begin{align*}
0\leq\int_M\langle L\N f,\N f\rangle dV\leq(\lambda-2\alpha)\int_M |\N f|^2dV.
\end{align*}
\end{proof}

\section{The Negative Gradient Flow}
\subsection{The energy $E_+$}
In this section we will study the negative gradient flow corresponding to $E_+$. Consider the initial value problem
\begin{align*}
\frac{\partial}{\partial t}f=\tau_+(f)\\
f|_{t=0}=f_0.
\end{align*}
A solution to this problem is said to solve the $\delbar$-harmonic map heat flow with initial condition $f_0$. We've seen that $\tau_+=\tau+A$, while $A$ consists of two terms, one linear and one quadratic in $Tf$. Therefore the linearized operator of $\tau_+$ has the same principal symbol as that of $\tau$. Applying a dose of semi-linear parabolic existence and regularity theory we therefore obtain
\begin{prop}
Let $M$ be a closed, smooth, almost Hermitian manifold and let $f_0:M\to N$ be a smooth map from $M$ to the smooth, almost Hermitian manifold $N$. Then there is a $T>0$ such that the initial value problem
\begin{align*}
\frac{\partial}{\partial t}f=\tau_+(f)\\
f|_{t=0}=f_0
\end{align*}
has a unique smooth solution on $[0,T)\times M$.
\end{prop}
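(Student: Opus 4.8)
The plan is to recognize the flow $\del_t f = \tau_+(f)$ as a semilinear parabolic system and to invoke the standard existence and regularity theory for such systems. As observed just before the statement, $\tau_+ = \tau + A$, where the new term $A$ is smooth and involves no derivatives of $f$ of order higher than one: its coordinate expression from Proposition 1.2 is linear in $Tf$ in the $d^*\omega_M$ term and quadratic in $Tf$ in the $d\omega_N$ term. Consequently the linearization of $\tau_+$ has the same principal symbol as the linearization of the tension $\tau$, namely $g^{\ab}\xi_\ag\xi_\bg$ times the identity on $f^{-1}TN$; this symbol is negative definite, so the flow is strictly parabolic and semilinear (the leading coefficient is the fixed metric $g$, independent of $f$).

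First I would fix a Nash isometric embedding $N \hookrightarrow \arr^K$ and regard a map $f:M\to N$ as an $\arr^K$-valued map constrained to $N$. In this picture the tension becomes $\tau(f)=\Delta_g f - g^{\ab}\mathrm{II}(\del_\ag f,\del_\bg f)$, where $\Delta_g$ is the Laplace--Beltrami operator acting componentwise and $\mathrm{II}$ is the second fundamental form of $N$ in $\arr^K$, while $A$ becomes a smooth expression $A(f,Tf)$ that is algebraic in $f$ and at most quadratic in $Tf$. Thus the flow takes the schematic form
\[
\del_t f = \Delta_g f + F(f,Tf), \qquad f|_{t=0}=f_0,
\]
with $F$ smooth and growing at most quadratically in the gradient variable.

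Next I would establish short-time existence by a contraction mapping argument built on Duhamel's formula. Let $\{e^{t\Delta_g}\}$ be the heat semigroup on the closed manifold $M$, whose smoothing estimates in the parabolic H\"older spaces $C^{2+\gamma,\,1+\gamma/2}$ are standard, and seek a fixed point of
\[
\Phi(f)(t)=e^{t\Delta_g}f_0+\int_0^t e^{(t-s)\Delta_g}F(f,Tf)(s)\,ds
\]
in a closed ball of such a space on $M\times[0,T]$. Because $F$ is quadratic in $Tf$, the crucial estimate is control of the H\"older norm of $F(f,Tf)$ by the $C^{1+\gamma}$ norm of $f$, which the Schauder estimates for $e^{t\Delta_g}$ supply at the cost of a controlled power of $t$; choosing $T$ small makes $\Phi$ a contraction and yields a unique solution of finite regularity. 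Alternatively one may invoke the abstract existence theorem for semilinear parabolic systems (as in the framework used by Eells--Sampson and Hamilton), since the leading operator is self-adjoint and elliptic.

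Two points then remain. First, the solution must stay on $N$: because $A$ is by construction a section of $f^{-1}TN$ (its coordinate expression in Proposition 1.2 is contracted with $h^{li}$), the full velocity $\tau_+=\tau+A$ is everywhere tangent to $N$, so the constraint is preserved; this is made rigorous by composing with the nearest-point projection on a tubular neighborhood of $N$ and checking that the squared distance to $N$ satisfies a parabolic inequality with zero initial data. Second, smoothness follows by bootstrapping: once a solution in $C^{2+\gamma,\,1+\gamma/2}$ exists, the smoothness of the coefficients together with parabolic Schauder estimates upgrades it to $C^\infty$ on $M\times(0,T)$, and up to $t=0$ when $f_0$ is smooth, while uniqueness comes from the same contraction estimate (or a Gronwall argument on the difference of two solutions). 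The main obstacle I expect is precisely the quadratic gradient nonlinearity, which sits at the borderline where a naive iteration loses a derivative, so the estimates must be arranged to trade the smoothing of the heat kernel against a small power of $T$; but since the principal part is literally the Laplacian, this is the same difficulty already resolved for the harmonic map heat flow and requires no new idea.
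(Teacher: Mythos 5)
Your proposal is correct and takes essentially the same approach as the paper: the paper's entire proof is the observation that $A$ involves at most first derivatives of $f$ (one term linear, one quadratic in $Tf$), so the linearization of $\tau_+$ has the same principal symbol as that of $\tau$, making the flow a semilinear strictly parabolic system to which standard existence and regularity theory applies. Your write-up merely unpacks that ``dose of semi-linear parabolic theory'' (Nash embedding, Duhamel contraction, preservation of the constraint to $N$, bootstrap and uniqueness), all of which is the standard machinery the paper invokes implicitly.
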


We next obtain some basic apriori estimates for the $\delbar$-harmonic map heat flow which will give sufficient conditions to conclude long time existence.
\begin{prop}\label{estimate}
Let $f$ be a solution to the $\tau_+$-flow. Then
\begin{align*}
\N_t\tau_+=\Delta\tau_++tr_gR(\tau_+,Tf)Tf+\N_tA,
\end{align*}
and
\begin{align*}
\N_tTf=\Delta Tf-Q+\N A.
\end{align*}
Therefore
\begin{align*}
(\del_t-\Delta)\frac{1}{2}|\tau_+|^2=-|\N\tau_+|^2+\langle tr_gR^N(\tau_+,Tf)Tf,\tau_+\rangle+\langle\N_tA,\tau_+\rangle,
\end{align*}
and
\begin{align*}
(\del_t-\Delta)\frac{1}{2}|Tf|^2=-|\N Tf|^2-\langle Q,Tf\rangle+\langle\N A,Tf\rangle,
\end{align*}
\begin{proof}
We compute
\begin{align*}
\N_t\tau_+&=\N_t\tau+\N_tA\\
&=tr_g\N_t\N Tf+\N_tA\\
&=tr_g\N^2\tau_+ +tr_g R^N(\tau_+,Tf)Tf+\N_t A\\
&=\Delta\tau_+ +tr_g R^N(\tau_+,Tf)Tf+\N_t A,
\end{align*}
using the commutator formula $\N_t\N-\N\N_t=R^N(\dot{f},Tf)$. This proves the first claim. The second claim follows from a similar calculation
\begin{align*}
\N_t Tf=\N\tau_+&=\N\tau+\N A\\
&=\Delta Tf-Q+\N A,
\end{align*}
where 
\begin{equation*}
Q(\cdot)=-R^N(Tf\cdot,Tf_\alpha)Tf_\alpha+Tf(Rc^M\cdot)
\end{equation*}
and we have used the Bochner formula
\begin{equation*}
\Delta Tf=\N\tau+Q.
\end{equation*}
The final claims follow from
\begin{align*}
\partial_t\frac{1}{2}|\tau_+|^2&=\langle \N_t\tau_+,\tau_+\rangle\\
&=\langle\Delta\tau_+ +tr_g R^N(\tau_+,Tf)Tf+\N_t A,\tau_+\rangle\\
&=\Delta \frac{1}{2}|\tau_+|^2-|\N \tau_+|^2+\langle tr_g  R^N(\tau_+,Tf)Tf+\N_t A,\tau_+\rangle,
\end{align*}
with a similar calculation for $\partial_t\frac{1}{2}|Tf|^2$.
\end{proof}
\end{prop}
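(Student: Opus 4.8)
The plan is to obtain the two evolution equations by commuting the time derivative $\N_t$ past the spatial operators defining $\tau_+$, and then to extract the reaction--diffusion equations through the standard Bochner identity for the Laplacian of a squared norm. The three inputs I would rely on are all available above: the unwinding lemma $\N_t Tf=\N v$, applied here with variation field $v=\dot f=\tau_+$ dictated by the flow equation; the commutation rule $\N_t\N-\N\N_t=R^N(\dot f,Tf)$, again with $\dot f=\tau_+$; and the Bochner (Weitzenb\"ock) formula for maps, $\Delta Tf=\N\tau+Q$ with $Q(\cdot)=-R^N(Tf\cdot,Tf_\alpha)Tf_\alpha+Tf(Rc^M\cdot)$.

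First I would write $\tau_+=\tau+A=tr_g\N Tf+A$ and differentiate in time. Since the source metric is $t$-independent, $\N_t$ passes through the trace, giving $\N_t\tau_+=tr_g\N_t\N Tf+\N_t A$. The commutation rule converts $\N_t\N Tf$ into $\N\N_t Tf+R^N(\tau_+,Tf)Tf$, and then $\N_t Tf=\N\tau_+$ turns the leading term into $\N\N\tau_+$, whose trace is $\Delta\tau_+$. This produces the first claimed equation. The second is shorter: $\N_t Tf=\N v=\N\tau_+=\N\tau+\N A$, and substituting $\N\tau=\Delta Tf-Q$ from the Bochner formula yields $\N_t Tf=\Delta Tf-Q+\N A$.

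For the two reaction--diffusion equations I would invoke the pointwise identity $\Delta\tfrac12|s|^2=\langle\Delta s,s\rangle+|\N s|^2$, valid for any section $s$ of a bundle with metric connection. Writing $\partial_t\tfrac12|\tau_+|^2=\langle\N_t\tau_+,\tau_+\rangle$ and inserting the first evolution equation gives $\langle\Delta\tau_+,\tau_+\rangle$ together with the curvature and $\N_t A$ terms; replacing $\langle\Delta\tau_+,\tau_+\rangle$ by $\Delta\tfrac12|\tau_+|^2-|\N\tau_+|^2$ and moving the Laplacian to the left produces the stated equation for $(\partial_t-\Delta)\tfrac12|\tau_+|^2$. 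The computation for $|Tf|^2$ is structurally identical, started from $\N_t Tf=\Delta Tf-Q+\N A$, and delivers $(\partial_t-\Delta)\tfrac12|Tf|^2=-|\N Tf|^2-\langle Q,Tf\rangle+\langle\N A,Tf\rangle$.

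The main obstacle is not the time-differentiation, which is bookkeeping, but the Bochner formula $\Delta Tf=\N\tau+Q$ that powers the second equation. Establishing it requires commuting two spatial covariant derivatives on the section $Tf$ of $T^*M\otimes f^{-1}TN$, which brings in the Ricci curvature of $(M,g)$ acting on the $T^*M$ factor and the Riemann curvature of $(N,h)$ contracted twice against $Tf$ --- exactly the two pieces assembled into $Q$. Care is also needed to record that the variation field entering both the commutator and the unwinding lemma is $\dot f=\tau_+$, so that the curvature term appearing in the first equation is $R^N(\tau_+,Tf)Tf$ rather than $R^N(v,Tf)Tf$ for a generic $v$.
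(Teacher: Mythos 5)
Your proposal is correct and follows essentially the same route as the paper: differentiate $\tau_+=\tau+A$ in time, apply the commutator formula $\N_t\N-\N\N_t=R^N(\dot f,Tf)$ with $\dot f=\tau_+$, use the Bochner identity $\Delta Tf=\N\tau+Q$ for the $Tf$ evolution, and then extract the two reaction--diffusion equations via $\Delta\tfrac12|s|^2=\langle\Delta s,s\rangle+|\N s|^2$. The paper likewise treats the Bochner formula as a known input rather than re-deriving it, so there is no substantive difference between your argument and the one given there.
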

\begin{prop}
If the solution to the $\delbar$-harmonic heat flow between two compact, almost Hermitian manifolds exists on a maximal time interval $[0,T)$, with $T<\infty$, then $\limsup_{t\uparrow T}|Tf|_\infty=\infty$.
\begin{proof}
This is a standard obstruction in semi-linear parabolic systems. If there is some positive constant $C$ such that $|Tf|<C$ on $[0,T)$ then we conclude convergence to a smooth map at time $T$. Using this map as the new condition for the flow, we extend the solution smoothly past $T$ and contradict maximality.
\end{proof}
\end{prop}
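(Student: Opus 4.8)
The plan is to argue by contraposition. Suppose the derivative stays bounded, $\sup_{[0,T)}|Tf|_\infty\le C<\infty$; I will show that the flow then extends smoothly across $T$, contradicting the maximality of $T$. The essential structural fact is that $\del_tf=\tau_+(f)=\tau(f)+A(f)$ is a semi-linear parabolic system whose principal part is $\Delta$ acting on $f$ and whose remaining terms are of lower order: $\tau$ contributes the usual quadratic term $\sim\Gamma(f)(Tf,Tf)$, while the new term $A$ is only first order in $f$, contributing one piece linear and one piece quadratic in $Tf$, with coefficients built from the fixed tensors $\omega_M$, $d^*\omega_M$, $d\omega_N$ and from $f$ itself. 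Consequently the scheme of a priori estimates used for the harmonic map heat flow applies here once the $A$-contributions are checked to be absorbable.

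First I would upgrade the $C^0$ bound on $Tf$ to uniform bounds on all higher derivatives. Starting from the evolution identities of the previous proposition, in particular
\begin{align*}
(\del_t-\Delta)\tfrac{1}{2}|Tf|^2=-|\N Tf|^2-\langle Q,Tf\rangle+\langle\N A,Tf\rangle,
\end{align*}
I would differentiate once more to obtain a Bochner-type inequality for $|\N Tf|^2$ of the schematic form $(\del_t-\Delta)|\N Tf|^2\le-2|\N^2Tf|^2+c(C)(|\N Tf|^2+|\N Tf|+1)$, where the bound $|Tf|\le C$ together with the bounded geometry of the compact target controls the curvature term $Q$, and where the terms arising from $\N^2A$ are linear in $\N^2Tf$ and $\N Tf$ (hence absorbable into $|\N^2Tf|^2$ by Young's inequality). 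Forming the quantity $|\N Tf|^2+K|Tf|^2$ for $K$ large and applying the maximum principle on $[\delta,T)$ then yields a uniform bound $|\N Tf|\le c(C,\delta)$, and iterating this Bernstein--Shi procedure gives uniform bounds on every $|\N^kTf|$ on $[\delta,T)$.

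Given these bounds, I would conclude as follows. Since $\del_tf=\tau_+(f)$ is then uniformly bounded, $f(\cdot,t)$ converges uniformly to a limit $f(\cdot,T)$ as $t\uparrow T$, and the uniform control of all spatial derivatives makes this limit smooth with $f(\cdot,t)\to f(\cdot,T)$ in $C^\infty$. Taking $f(\cdot,T)$ as initial data in the short-time existence result proved above produces a smooth solution on $[T,T+\epsilon)$, which by uniqueness agrees with $f$ and extends it smoothly to $[0,T+\epsilon)$, contradicting the maximality of $T$. Hence no such bound $C$ can exist, and $\limsup_{t\uparrow T}|Tf|_\infty=\infty$. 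I expect the main obstacle to be the first-derivative estimate: one must verify that the terms coming from $\N A$ and $\N^2A$—which are new relative to the harmonic map heat flow and involve the pullback of $d\omega_N$ together with $d^*\omega_M$—are genuinely lower order and are dominated by the good negative term $-|\N^2Tf|^2$; once this is in hand, the higher-order estimates and the extension argument are routine.
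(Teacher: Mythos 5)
Your proposal is correct and follows the same route as the paper: the paper's proof is precisely the compressed version of your contrapositive argument, namely that a uniform bound $|Tf|<C$ on $[0,T)$ yields smooth convergence to a limit map at time $T$, which is then used as new initial data to extend the flow past $T$ and contradict maximality. Your expansion of the intermediate step (upgrading the $C^0$ bound on $Tf$ to bounds on all higher derivatives via a Bernstein--Shi iteration, with the $\N A$ and $\N^2 A$ terms absorbed as lower-order contributions) is exactly the ``standard obstruction in semi-linear parabolic systems'' that the paper invokes without detail.
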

We can now prove a long time existence result for $\delbar$-harmonic map heat flow, a corollary of which is Theorem 1.5.
\begin{prop}\label{longtime}
Suppose that $M$ is compact and the target $N$ is almost Kahler with sectional curvature bounded above by a negative constant and with bounded Nijenhaus tensor. Then the $\delbar$-harmonic map heat flow beginning at any smooth $f_0:M\to N$ exists smoothly for all time.
\begin{proof}
Let $f$ be the corresponding solution to the flow. It amounts to proving that $|Tf|$ is bounded on any interval of the form $[0,T)$ where $T<\infty$. Recall from \ref{estimate} that
\begin{align*}
(\del_t-\Delta)\frac{1}{2}|Tf|^2=-|\N Tf|^2-\langle Q,Tf\rangle+\langle\N A,Tf\rangle.
\end{align*}
Since the target is almost K\"ahler we have $2A^\flat=\langle d^*\omega_M,f^\times\omega_N\rangle$. We then compute $\langle\N A,Tf\rangle$, which has the form
\begin{align*}
2\langle\N A,Tf\rangle=\N d^*\omega_M*Tf\wedge Tf+d^*\omega_M*\N Tf\wedge Tf+d^*\omega_M*\N^h\omega_N*(Tf\wedge Tf)*Tf,
\end{align*}
where we have used schematic notation. To clarify, if $A$ and $B$ are some tensors then $A*B$ denotes a tensor constructed from taking contractions, either with the metrics $g$, $h$ or the forms $\omega_M$, $\omega_N$, of $A\otimes B$ and $B\otimes A$. We have also used the notation that, if $A$ and $B$ are in $T^*M\otimes E$ for some vector bundle $E$ then $A\wedge B(X,Y)=A(X)\wedge B(Y)$. In particular, since $M$ is compact and $\N^h\omega_N$ is bounded, there is some constant $C_1$ independent of $f$ such that
\begin{align*}
\langle\N A,Tf\rangle\leq C_1(|Tf|^2+|\N Tf||Tf|+|Tf \wedge Tf||Tf|).
\end{align*}

Now, if the sectional curvatures of $N$ are bounded above by $K$ and the Ricci curvature of $M$ is bounded below by $R$, then the curvature term $\langle Q,Tf\rangle$ satisfies
\begin{align*}
-\langle Q,Tf\rangle\leq K|Tf\wedge Tf|^2-R|Tf|^2.
\end{align*}
An application of Young's inequality $ab\leq \frac{1}{2}(\epsilon a^2+\frac{1}{\epsilon}b^2)$ then gives
\begin{align*}
-|\N Tf|^2-\langle Q,Tf\rangle+\langle\N A,Tf\rangle&\leq C_1(1+\frac{1}{2\epsilon_1}+\frac{1}{2\epsilon_2})|Tf|^2-R|Tf|^2\\&+(\frac{C_1\epsilon_1}{2}-1)|\N Tf|^2+(\frac{C_1\epsilon_2}{2}+K)|Tf\wedge Tf|^2.
\end{align*}
Because $K$ is negative, by choosing $\epsilon_1$ and $\epsilon_2$ small enough the last two terms of this expression are negative. Therefore there is some constant $C$ such that
\begin{align*}
\left(\partial_t-\Delta\right)|Tf|^2\leq C|Tf|^2,
\end{align*}
and so $|Tf|$ is bounded on any finite length time interval by the maximum principle.
\end{proof}
\end{prop}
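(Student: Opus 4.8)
The plan is to reduce long-time existence to an a priori bound on $|Tf|$ and then produce that bound by applying the maximum principle to the evolution equation for $|Tf|^2$ supplied by Proposition \ref{estimate}. By the preceding proposition, a finite-time singularity at some $T<\infty$ forces $\limsup_{t\uparrow T}|Tf|_\infty=\infty$, so it suffices to show that on every interval $[0,T)$ with $T<\infty$ the quantity $|Tf|^2$ stays bounded. First I would specialize the identity $(\partial_t-\Delta)\tfrac12|Tf|^2=-|\N Tf|^2-\langle Q,Tf\rangle+\langle\N A,Tf\rangle$ to the almost K\"ahler case. Since $d\omega_N=0$, the second term of $A$ vanishes and $2A^\flat=\langle d^*\omega_M,f^\times\omega_N\rangle$ is \emph{linear} in $Tf$; hence $\N A$ is schematically a sum of $\N d^*\omega_M * Tf$, $d^*\omega_M*\N Tf$, and (by the chain rule, since $\omega_N$ depends on the image point) $d^*\omega_M*\N^h\omega_N*Tf*Tf$.

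Next I would bound each contribution to $\langle\N A,Tf\rangle$. Because $M$ is compact, $d^*\omega_M$ and $\N d^*\omega_M$ are bounded, and the hypothesis of bounded Nijenhuis tensor controls $\N^h\omega_N$; thus there is a constant $C_1$, independent of $f$, with $\langle\N A,Tf\rangle\le C_1(|Tf|^2+|\N Tf||Tf|+|Tf\wedge Tf||Tf|)$. The structurally crucial point is that the genuinely cubic contribution, coming from the $\N^h\omega_N$ term, appears as $|Tf\wedge Tf||Tf|$ rather than $|Tf|^3$: the antisymmetry built into $\omega_N$ and the pairing $Tf\wedge Tf$ is exactly the object dominated below by the sectional curvature. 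Writing $Q(\cdot)=-R^N(Tf\cdot,Tf_\alpha)Tf_\alpha+Tf(Rc^M\cdot)$ and using that the sectional curvature of $N$ is bounded above by $K<0$ while $Rc^M\ge R$, I would record $-\langle Q,Tf\rangle\le K|Tf\wedge Tf|^2-R|Tf|^2$.

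The final step is to absorb the bad cross terms. Applying Young's inequality $ab\le\tfrac12(\epsilon a^2+\epsilon^{-1}b^2)$ to $|\N Tf||Tf|$ and to $|Tf\wedge Tf||Tf|$, I would collect the coefficients of $|\N Tf|^2$ and of $|Tf\wedge Tf|^2$, which are $(\tfrac{C_1\epsilon_1}{2}-1)$ and $(\tfrac{C_1\epsilon_2}{2}+K)$ respectively. Since $K<0$, choosing $\epsilon_1,\epsilon_2$ small renders both coefficients negative, so those terms may be dropped, leaving $(\partial_t-\Delta)|Tf|^2\le C|Tf|^2$ for a constant $C$ depending only on $C_1$, $R$, and the chosen parameters. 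The parabolic maximum principle on the compact $M$ then gives at worst exponential growth, $\max_M|Tf|^2(t)\le e^{Ct}\max_M|Tf_0|^2$, which is finite for every finite $T$, contradicting the blow-up criterion and yielding existence for all time.

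I expect the main obstacle to be the second step: verifying the precise schematic form of $\N A$ and, above all, confirming that the cubic term is confined to $|Tf\wedge Tf||Tf|$ rather than a general $|Tf|^3$. This is what makes strictly negative curvature usable, since the curvature term furnishes the only negative reservoir available and it dominates $|Tf\wedge Tf|^2$ but not $|Tf|^4$; an uncontrolled $|Tf|^3$ term would break the argument. This also accounts for the remark that when $J_N$ is integrable the hypothesis relaxes to nonpositive curvature, as $\N^h\omega_N$ then carries extra structure weakening the cubic term.
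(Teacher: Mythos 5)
Your proposal is correct and follows essentially the same route as the paper: reduce to an a priori bound on $|Tf|$ via the blow-up criterion, specialize the evolution equation of Proposition \ref{estimate} to the almost K\"ahler case so that $\langle\N A,Tf\rangle$ is controlled by $|Tf|^2+|\N Tf||Tf|+|Tf\wedge Tf||Tf|$, absorb the cross terms via Young's inequality into $-|\N Tf|^2$ and the strictly negative curvature term $K|Tf\wedge Tf|^2$, and conclude with the maximum principle. You also correctly identify the structural heart of the argument --- that the cubic term pairs as $|Tf\wedge Tf||Tf|$ rather than $|Tf|^3$, which is precisely what the negative sectional curvature can dominate --- matching the paper's own emphasis.
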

We note that if the complex structure of $N$ is integrable, so that $N$ is genuinely K\"ahler, then there is no $\N^h\omega_N$ term in the above estimates. This would mean we can weaken the negative sectional curvature assumption to just non-positive curvature. With a more careful analysis several of the assumptions in this theorem can be weakened; certainly regularity in $f_0$ or compactness/boundedness assumptions can be weakened. We make the following long time existence conjecture, independent of any integrability assumptions of any kind on source or target.
\begin{conj}
If $M$ is compact, the sectional curvature of $N$ is non-positive, and $\N J_N$ is bounded, then the $\delbar$-harmonic map heat flow beginning with any smooth $f_0:M\to N$ exists smoothly for all time.
\end{conj}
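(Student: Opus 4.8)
The plan is to push the Bochner and maximum-principle argument of Proposition \ref{longtime} through for a general almost Hermitian target, keeping careful track of the two features absent there: the quadratic $d\omega_N$ term in $A$ (which vanishes in the almost K\"ahler case), and the loss of the strictly negative curvature that was used to absorb a quartic remainder. First I would record, exactly as in Proposition \ref{estimate}, the evolution
\[(\del_t-\Delta)\tfrac12|Tf|^2=-|\N Tf|^2-\langle Q,Tf\rangle+\langle\N A,Tf\rangle,\]
and expand $\langle\N A,Tf\rangle$ schematically from $2A^\flat=\langle d^*\omega_M,f^\times\omega_N\rangle+\langle\omega_M,f^\times d\omega_N\rangle$. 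The first (linear) summand reproduces precisely the terms treated in Proposition \ref{longtime}, contributions bounded by $|Tf|^2$, $|\N Tf||Tf|$, and $|Tf\wedge Tf||Tf|$, the last arising through the bounded tensor $\N^h\omega_N$.

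The genuinely new work is the second (quadratic) summand. Differentiating $\langle\omega_M,f^\times d\omega_N\rangle$ and using that $\omega_M$, $d\omega_N$, $\N\omega_M$, and $\N^h d\omega_N$ are bounded (by compactness of $M$ and the assumed bounded geometry of $N$), one obtains schematically
\begin{align*}
\N A\sim \N\omega_M * d\omega_N *(Tf\wedge Tf)&+\omega_M*\N^h d\omega_N*(Tf\wedge Tf)*Tf\\
&+\omega_M*d\omega_N*\N Tf * Tf.
\end{align*}
Pairing with $Tf$ produces, beyond the earlier terms, a cubic term $|Tf|^3$, a first-order term $|\N Tf||Tf|^2$, and, critically, a genuine quartic term $|Tf|^4$ coming from $\N^h d\omega_N$. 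Using Young's inequality to absorb every factor of $|\N Tf|$ into $-|\N Tf|^2$ (at the cost of more $|Tf|^4$), bounding $|Tf|^3$ by $|Tf|^2+|Tf|^4$, and discarding the nonpositive $K|Tf\wedge Tf|^2$ since $K\le 0$, the best this method yields is a Riccati-type reaction inequality $(\del_t-\Delta)u\le C(1+u+u^2)$ with $u=\frac12|Tf|^2$.

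The main obstacle is exactly this quartic term. A scalar reaction $u'=Cu^2$ blows up in finite time, so the maximum principle alone cannot close the argument, in contrast to the almost K\"ahler case of Proposition \ref{longtime}. There, two points conspire in one's favor: the only quartic contribution was the $|Tf\wedge Tf|^2$ produced by $\N^h\omega_N$, and strictly negative curvature supplied a matching negative $K|Tf\wedge Tf|^2$ to cancel it (which is why even the almost K\"ahler, non-K\"ahler case resists weakening to $K\le 0$). Under the conjecture's hypotheses one additionally faces a quartic of the pointwise shape $|Tf|^4$ rather than $|Tf\wedge Tf|^2$; since $|Tf\wedge Tf|$ can vanish while $|Tf|$ is large, no sign condition on the sectional curvature alone produces a compensating negative quartic. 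This is the analytic content of the remark that ``stronger conditions on the curvature in relation to the complex structure'' are needed.

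To prove the conjecture as stated I see two routes. The first is to replace ``non-positive sectional curvature'' by a coupled hypothesis forcing the sum of the curvature quartic and the $\N^h d\omega_N$ quartic to be pointwise nonpositive; this reinstates the absorption step and makes the maximum-principle argument succeed, but it genuinely strengthens the curvature assumption and does not settle the conjecture in its stated generality. The second, and the one I would pursue to keep the bare non-positivity hypothesis, is to argue by contradiction through blow-up, in the spirit of the bubbling result of the final section: assuming a finite maximal time, so that $|Tf|$ blows up (by the singularity obstruction established above), perform a parabolic rescaling at a point of maximal $|Tf|$ and extract a nonconstant limit. The decisive and hardest step is that the $d\omega_N$ term of $A$ carries the same parabolic homogeneity as the nonlinear part of $\tau$, so it persists in the rescaled limit: one obtains not a harmonic map but a $\delbar$-harmonic map into a non-positively curved flat model, and must prove a new Liouville-type theorem ruling it out. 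In the critical surface dimension this is plausible via an $\epsilon$-regularity estimate, using a local energy bound together with the Ladyzhenskaya inequality to absorb $\int|Tf|^4$ into $-\int|\N Tf|^2$; in higher dimensions the quartic is supercritical and this mechanism fails, so establishing the Liouville theorem for the limiting $\delbar$-harmonic equation is, in my view, the central remaining difficulty.
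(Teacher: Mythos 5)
You should be aware at the outset that the statement you are proving is a \emph{conjecture}: the paper contains no proof of it. What the paper does contain, in the paragraph following Proposition \ref{longtime}, is precisely the obstruction analysis you carry out --- the same three schematic terms $\N\omega_M*d\omega_N*Tf\wedge Tf\wedge Tf$, $d\omega_N*\N Tf\wedge Tf\wedge Tf$, and $\N^h d\omega_N*Tf\wedge Tf\wedge Tf*Tf$, and the same conclusion that a bare maximum-principle argument cannot absorb them under a sign condition on the curvature alone; the author then remarks that conditions relating the curvature of $N$ to the covariant derivatives of $J_N$ appear to be needed, and leaves the matter open. So your diagnosis agrees with the paper's own discussion and is sound as far as it goes. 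But a diagnosis is not a proof, and by your own accounting neither of your two routes closes: the first replaces ``non-positive sectional curvature'' by a coupled curvature--$\N^h d\omega_N$ hypothesis, hence proves a different statement, not the conjecture; the second reduces the conjecture to a Liouville-type theorem for $\delbar$-harmonic maps into non-positively curved model spaces which you do not prove, which you yourself identify as the central remaining difficulty, and whose supporting $\epsilon$-regularity mechanism you concede is available only in dimension two. The conjecture therefore remains exactly as open after your proposal as it is in the paper; there is no completed argument here whose correctness could be checked.

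One concrete point in your analysis should also be corrected, since it bears on your proposed dimension-two strategy. The dangerous quartic is not of raw pointwise shape $|Tf|^4$: in the term $\N^h d\omega_N*Tf\wedge Tf\wedge Tf*Tf$, three of the $Tf$ factors fill the antisymmetric slots inherited from $d\omega_N$, so the term is controlled by $|\N^h d\omega_N|\,|Tf\wedge Tf\wedge Tf|\,|Tf|$ and vanishes identically wherever $Tf$ has rank at most two; the same is true of the cubic term. This is the rank observation the paper itself uses in Section 4 to see that $\langle A,Tf\rangle=0$ for the maps $T^2\to S^3\times S^1$. Consequently, when $\dim M=2$ the only surviving non--almost-K\"ahler contribution to $\langle\N A,Tf\rangle$ is bounded by $C\,|\N Tf|\,|Tf\wedge Tf|$ with $C$ comparable to $\sup_N|d\omega_N|$, and Young's inequality then closes the maximum-principle argument of Proposition \ref{longtime} with no almost K\"ahler hypothesis, provided the sectional curvature is sufficiently negative relative to $\sup_N|d\omega_N|$ --- a quantitative coupled condition, which is exactly the shape of hypothesis the paper anticipates. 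This sharpens your remark about the critical dimension: on surfaces the issue is not a raw quartic at all but the quantitative coupling, and passing from ``sufficiently negative'' to merely non-positive curvature, together with the genuinely quartic higher-dimensional case, is where the conjecture actually stands open.
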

A quick look at the form of $\langle\N A,Tf\rangle$ should indicate to the reader why obtaining such a result using a basic maximum principle type argument as above is a little delicate. Without the almost K\"ahler assumption on $N$ there are three additional terms which have the schematic form
\begin{align*}
\N\omega_M*d\omega_N*Tf\wedge Tf\wedge Tf+d\omega_N*\N Tf\wedge Tf\wedge Tf+\N^h d\omega_N*Tf\wedge Tf\wedge Tf*Tf.
\end{align*}
There appears to at least be some conditions on the curvature of $N$ in relation to the covariant derivatives of $J_N$ to conclude long time existence, but we will not investigate this any further in this paper.
\subsection{The Perturbed Energies}
In this section we will consider the perturbed energies $E_a=E+aK$. Our first lemma is obvious from the previous sections.
\begin{lemma}
The Euler-Lagrange equation of $E_a$ is $\tau_a=\tau+aA$.
\end{lemma}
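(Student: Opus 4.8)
The plan is to exploit the fact that taking the first variation is a linear operation on functionals, so that the Euler-Lagrange operator of a linear combination is that same linear combination of the individual Euler-Lagrange operators; no genuinely new computation is required. Concretely, I would recall from the first variation of the Dirichlet energy that, for any compactly supported variation field $v=\partial_t f$,
\begin{align*}
\frac{d}{dt}E(f) = -\int_M \langle v,\tau\rangle dV_g,
\end{align*}
and from the first variation of $K$ that, for the same class of variations,
\begin{align*}
\frac{d}{dt}K(f) = -\int_M \langle v,A\rangle dV_g.
\end{align*}

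The first step is to observe that both formulas are derived against the identical space of compactly supported test fields $v$ and use the same $L^2$ pairing on sections of $f^{-1}TN$, so they may legitimately be combined along a single one-parameter family. Since $E_a=E+aK$ by definition, differentiating under the integral and using linearity of $\frac{d}{dt}$ yields
\begin{align*}
\frac{d}{dt}E_a(f) = \frac{d}{dt}E(f)+a\frac{d}{dt}K(f) = -\int_M \langle v,\tau+aA\rangle dV_g.
\end{align*}
I would then apply the fundamental lemma of the calculus of variations: $f$ is critical for $E_a$ exactly when the right-hand side vanishes for every compactly supported $v$, which forces $\tau+aA=0$ pointwise. Hence the Euler-Lagrange equation is $\tau_a=\tau+aA$, as claimed.

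I do not anticipate any real obstacle here; the content is entirely bookkeeping. The only point worth verifying is that the two first-variation computations were performed for the same family and the same metric pairing, so that their sum is meaningful, which is the case. As a sanity check I would note that $a=1$ recovers $\tau_+=\tau+A$ and $a=0$ recovers the harmonic map equation $\tau=0$, consistent with the definitions of $E_+$ and $E$ used throughout.
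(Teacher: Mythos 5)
Your proposal is correct and matches the paper's approach: the paper declares this lemma ``obvious from the previous sections,'' meaning exactly the linearity argument you give, combining the first variation of $E$ (yielding $\tau$) with that of $K$ (yielding $A$) via $E_a = E + aK$. Your write-up simply makes explicit the bookkeeping the paper leaves implicit.
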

We also readily have a long time existence result for the flow associated to $E_a$, as well as convergence and the proof of \ref{aconv} if $|a|<1$.
\begin{proof}
Let $f$ be the solution to the flow. The long time existence of the flow was established in \ref{longtime}, where we note that the change from $E_+$ to $E_a$ is a trivial modification of the proof. Specifically, along a solution to the flow there is a constant $C_1>0$ depending on $a$ and background data such that
\begin{align*}
(\partial_t-\Delta)|Tf|^2\leq C_1|Tf|^2,
\end{align*}
from which we conclude long time existence. We then must prove convergence at infinite time, which we do so in essentially the same way as for the Eells-Sampson result.

Since $(1-|a|)E\leq E_a$, we have a uniform energy bound $E<C$ along a solution to the flow when $|a|<1$. Recall Moser's Harnack inequality for subsolutions to the heat equation \cite{MOSER}: if $g$ is non-negative and there is some positive constant $C$ so that $(\partial_t-\Delta)g\leq Cg$ in a parabolic cylinder $P_R(x_0,t_0)=\{(x,t)|d(x,x_0)\leq R, t_0-R^2\leq t\leq t_0\}$ centered at $(x_0,t_0)$, then there exists another constant $C'>0$ such that
\begin{align*}
g(x_0,t_0)\leq C'R^{-(n+2)}\int_{P_R(x_0,t_0)}gdV.
\end{align*}
We then apply this inequality to $g=|Tf|^2$, giving, for some constant $C$ depending only on background data and $a$,
\begin{align*}
|Tf|^2(x,t)&\leq CR^{-(n+2)}\int_{P_R(x_0,t_0)}|Tf|^2 \\
&\leq CR^{-(n+2)}\int_{t-R^2}^tE(f(s))ds\\
&\leq \frac{C}{1-|a|}R^{-(n+2)}\int_{t-R^2}^tE_a(f(s))ds\\
&\leq \frac{C}{1-|a|}R^{-n}E_a(f_0).
\end{align*}
We therefore have a uniform bound on $|Tf|$ on all of $[0,\infty)$. By the higher regularity theory for second-order parabolic equations we conclude the existence of constants $C(f_0,M,N,k,a)$ such that $\sup_{M\times[0,\infty)}|\N^k Tf|\leq C(f_0,M,N,k,a)$.
Note also that there is some constant $C$ such that
\begin{align*}
(\partial_t-\Delta)|\tau_a|^2\leq C|\tau_a|^2.
\end{align*}
Again by Moser's Harnack inequality we conclude
\begin{align*}
|\tau_a|^2(x,t)&\leq CR^{-(n+2)}\int_{t-R^2}^t\int_M|\tau_a|^2dVds\\
&= CR^{-(n+2)}\int_{t-R^2}^t-\partial_sE_ads\\
&= CR^{-(n+2)}(E_a(t-R^2)-E_a(t)),
\end{align*}
and so $|\tau_a|^2\to 0$ as $t\to\infty$. Taking any sequence of $t_i$ going to $\infty$, by the previous estimates we can extract a subsequence such that $f(t_i)$ converges in any $C^k$ norm to some $f_\infty$ satisfying $\tau_a(f)=0$.
\end{proof}
\subsection{Uniform Energy Bounds}
The previous result, while satisfactory from an analytic point of view, is far from proving convergence for the $\delbar$-harmonic map heat flow for general non-positively curved targets, and it is unfortunate that we had to consider the coercive energies $E_a$ instead of the energy $E_+$, which is the focus of the paper. The standard energy $E$ of a solution to the $\delbar$-harmonic map heat flow may not be bounded along a solution to the flow, but such a uniform bound is necessary for many known parabolic and elliptic estimates to apply. We know of no examples where such a uniform bound fails to hold, but we cannot rule it out in general. If the target is almost K\"ahler we at least have such a bound on any finite length time interval.
\begin{prop}
If $N$ is almost K\"ahler then a solution to $\delbar$-heat flow has bounded energy on time intervals of finite length.
\begin{proof}
Along a $C^2$ solution to the flow
\begin{align*}
\frac{d}{dt}E=-\int |\tau|^2-\int_M\langle \tau,A\rangle
\end{align*}
Since $N$ is almost K\"ahler, we have that $A$ is some tensor which is linear in $Tf$ and contracted with only $d^*\omega_M$ and $\omega_N$. Therefore there are some positive constants $C_1$ and $C_2$ such that $-\langle \tau,A\rangle\leq C_1|\tau||Tf|\leq\frac{1}{2}(|\tau|^2+C_2|Tf|^2)$. Therefore
\begin{align*}
\frac{d}{dt}E\leq C_3E
\end{align*}
for some constant $C_3$. Therefore
$E(t)\leq E(0)e^{C_3 t}$.
\end{proof}
\end{prop}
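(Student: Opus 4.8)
The plan is to differentiate the standard Dirichlet energy $E$ along the flow and absorb the cross term produced by the first-order correction $A$ into a good negative term, arriving at a Gr\"onwall-type differential inequality. First I would recall the first variation of the Dirichlet energy, $\frac{d}{dt}E(f)=-\int_M\langle v,\tau\rangle dV_g$, valid for any compactly supported variation field $v=\partial_t f$. Along a solution to the $\delbar$-harmonic map heat flow the variation field is exactly $v=\tau_+=\tau+A$, so substituting gives
\begin{align*}
\frac{d}{dt}E(f)=-\int_M\langle\tau+A,\tau\rangle dV_g=-\int_M|\tau|^2dV_g-\int_M\langle A,\tau\rangle dV_g.
\end{align*}
The first term is manifestly non-positive, so the entire difficulty is to control the sign-indefinite cross term $-\int_M\langle A,\tau\rangle$.

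The key observation -- and the point where the almost K\"ahler hypothesis is indispensable -- is that when $d\omega_N=0$ the second, quadratic-in-$Tf$ piece of $A$ from Proposition 1.2 vanishes identically, leaving $2A^\flat=\langle d^*\omega_M,f^\times\omega_N\rangle$, which is \emph{linear} in $Tf$. Since $M$ is compact, $d^*\omega_M$ is a bounded tensor, and $\omega_N$ is bounded on the compact target $N$, so there is a constant $C_1$ depending only on the background geometry with $|A|\leq C_1|Tf|$ pointwise. I would then apply Cauchy--Schwarz followed by Young's inequality to obtain
\begin{align*}
-\langle A,\tau\rangle\leq C_1|Tf||\tau|\leq\frac{1}{2}|\tau|^2+\frac{C_1^2}{2}|Tf|^2.
\end{align*}

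Feeding this back into the energy identity, the $\frac{1}{2}|\tau|^2$ term is absorbed by the good term $-\int|\tau|^2$, and using $\int_M|Tf|^2dV_g=2E(f)$ I would conclude
\begin{align*}
\frac{d}{dt}E(f)\leq-\frac{1}{2}\int_M|\tau|^2dV_g+\frac{C_1^2}{2}\int_M|Tf|^2dV_g\leq C_3E(f),
\end{align*}
for a constant $C_3$ depending only on background data, since the remaining $|\tau|^2$ integral only helps. Gr\"onwall's inequality then yields $E(t)\leq E(0)e^{C_3t}$, which is finite on every interval of finite length, completing the argument.

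I expect the main obstacle to be conceptual rather than computational: recognizing that almost K\"ahlerity is precisely what makes $A$ linear in $Tf$. Were the quadratic term present -- the generic case $d\omega_N\neq 0$ -- the cross term would be bounded only by $|\tau||Tf|^2$, and Young's inequality would instead generate a $\int_M|Tf|^4$ contribution that is not controlled by $E$; the Gr\"onwall scheme would then break down. This is consistent with the paper's broader theme that $E_+$ is non-coercive and that securing a uniform energy bound is the central analytic gap in the general (non--almost-K\"ahler) setting.
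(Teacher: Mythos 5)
Your proposal is correct and follows essentially the same route as the paper's own proof: differentiate $E$ along the flow using $v=\tau_+=\tau+A$, use the almost K\"ahler hypothesis to see that $A$ is linear in $Tf$ (hence $|A|\leq C_1|Tf|$), absorb the cross term via Young's inequality, and conclude with Gr\"onwall. Your write-up is in fact somewhat more detailed than the paper's, spelling out explicitly why the quadratic piece of $A$ vanishes and where compactness enters, but the argument is identical in substance.
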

If our manifolds are uniformly equivalent to balanced and almost K\"ahler manifolds, then remarkably an energy bound does hold.
\begin{prop}
Suppose that $(M,g,J_M)$ is compact and uniformly equivalent to a $J_M$-compatible balanced metric $g_0$, so that $d^*\omega_{g_0}=0$. Suppose also $(N,h,J_N)$ is uniformly equivalent to a $J_N$-compatible almost K\"ahler metric $h_0$, non necessarily complete. Suppose further that $f_t$ is a smooth one-parameter family of maps such that $E_+(f_t)$, computed with respect to $g$ and $h$, is uniformly bounded. Then there is a uniform bound on $E(f_t)$ computed with respect to $g$ and $h$.
\begin{proof}
Let $E^{gh}(f_t)$ and $E_+^{gh}(f_t)$ denote the energy and pseudoholomorphic energy of $f_t$ computed with respect to the metrics $g$ and $h$, and let $K^{gh}(f_t)$ be the difference between these. Note that $K^{g_0h_0}(f_0)$ is a smooth homotopy invariant of $f_0$, and so $K^{g_0h_0}(f_t)$ is constant. By the uniform equivalence of the metrics, we have
\begin{align*}
E^{gh}(f_t)\leq CE^{g_0h_0}(f_t)&=C(E_+^{g_0h_0}(f_t)-K^{g_0h_0}(f_t))\\
&=C(E_+^{g_0h_0}(f_t)-K^{g_0h_0}(f_0))
\end{align*}
for some constant $C>0$ witnessing the equivalence of the metrics. But again by the uniform equivalence of the metrics
\begin{align*}
E_+^{g_0h_0}(f_t)\leq CE_+^{gh}(f_t).
\end{align*}
and so $E^{gh}(f_t)$ is uniformly bounded.
\end{proof}
\end{prop}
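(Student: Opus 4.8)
The plan is to exploit the homotopy invariance of $K$ established in the Lichnerowicz corollary, not for the given metrics $g$ and $h$ (for which neither $d^*\omega_M$ nor $d\omega_N$ need vanish), but for the auxiliary balanced and almost K\"ahler metrics $g_0$ and $h_0$. The real obstruction to bounding $E$ from a bound on $E_+$ is the indefinite sign of the difference term $K = E_+ - E$, so that $E_+$ need not be coercive. The key observation is that, in the auxiliary metrics, this term is forced to be a topological quantity, hence constant along the smooth family $f_t$, and then the desired bound follows by transferring everything through the metric equivalence.

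Concretely, I would first record the identity $E = E_+ - K$, valid for any pair of compatible metrics, so in particular $E^{g_0 h_0}(f_t) = E_+^{g_0 h_0}(f_t) - K^{g_0 h_0}(f_t)$. Since $g_0$ is balanced ($d^*\omega_{g_0}=0$) and $h_0$ is almost K\"ahler ($d\omega_{h_0}=0$), the Lichnerowicz corollary applies to the pair $(g_0,h_0)$ and shows $K^{g_0 h_0}$ is a smooth homotopy invariant; since $f_t$ is a smooth one-parameter family, $K^{g_0 h_0}(f_t) = K^{g_0 h_0}(f_0)$ is constant in $t$. This replaces the troublesome term by a fixed constant.

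Next I would transfer the bounds through the uniform equivalence of metrics. On the compact manifold $M$, writing both energy densities as quadratic forms in $Tf$, the hypotheses $C^{-1}g_0 \le g \le C g_0$ and $C^{-1}h_0 \le h \le C h_0$ yield two-sided comparisons of the inverse metrics $g^{\ab}$ with $g_0^{\ab}$ and of $h_{ij}$ with $h_{0,ij}$, as well as of the volume forms $dV_g$ with $dV_{g_0}$. Because the projection $\tfrac{1}{2}(Tf + J_NTfJ_M)$ appearing in $E_+$ is the same real-linear bundle map for either compatible metric, the identical comparison controls the integrand of $E_+$ as well. Hence there is a single constant $C$ with $E^{gh}(f_t)\le C\,E^{g_0 h_0}(f_t)$ and $E_+^{g_0 h_0}(f_t)\le C\,E_+^{gh}(f_t)$. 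Chaining these with the previous paragraph gives
\begin{align*}
E^{gh}(f_t)\le C\,E^{g_0 h_0}(f_t) = C\big(E_+^{g_0 h_0}(f_t)-K^{g_0 h_0}(f_0)\big)\le C^2\,E_+^{gh}(f_t)-C\,K^{g_0 h_0}(f_0),
\end{align*}
whose right-hand side is uniformly bounded by the standing hypothesis on $E_+^{gh}(f_t)$, since $K^{g_0 h_0}(f_0)$ is a fixed number and $E^{gh}\ge 0$.

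These steps are individually routine; the one point demanding care is that uniform equivalence of the \emph{metrics} genuinely gives uniform two-sided control of both \emph{functionals}, which requires comparing inverse metrics and volume forms rather than the metrics alone, together with the observation that the fixed almost-complex data interacts correctly with each compatible metric. I would regard this as bookkeeping rather than the crux. The genuinely load-bearing idea is the recognition that passing to balanced and almost-K\"ahler reference metrics converts $K$ into a homotopy invariant via the earlier corollary, so that the single obstruction to coercivity of $E_+$ vanishes in the auxiliary metrics; once that is in hand the estimate is forced.
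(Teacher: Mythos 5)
Your proposal is correct and follows essentially the same route as the paper's own proof: pass to the balanced/almost-K\"ahler reference metrics $(g_0,h_0)$, invoke the Lichnerowicz corollary to make $K^{g_0h_0}(f_t)$ constant in $t$, and transfer the resulting bound back through the uniform equivalence of the metrics. The extra bookkeeping you supply (comparison of inverse metrics and volume forms, and the observation that $Tf+J_NTfJ_M$ is metric-independent) is exactly what the paper leaves implicit, so nothing is missing and nothing is genuinely different.
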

The following corollary is immediate from the previous proposition, and gives a rough picture of what is occurring when we fail to have a uniform energy bound along a solution to the $\delbar$-harmonic map heat flow.
\begin{cor}
Let $f:M\times[0,T)\to N$, $0<T\leq\infty$ be a smooth solution to the $\delbar$-harmonic map heat flow between a compact, balanced, almost Hermitian manifold $M$ and a (not necessarily complete) almost Hermitian manifold $N$. If there is not a uniform energy bound $E(f_t)\leq C$ for all $t\in[0,T)$, then for each $t_0\in[0,T)$ no neighborhood of $f(M\times[t_0,T))$ in $N$ can admit a uniformly equivalent K\"ahler metric. 
\end{cor}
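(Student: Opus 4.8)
The plan is to argue by contraposition, combining the previous proposition with one extra observation that is special to the flow: along the $\delbar$-harmonic map heat flow the anti-holomorphic energy $E_+$ is automatically uniformly bounded. Indeed, since the flow $\del_t f=\tau_+(f)$ is the negative $L^2$-gradient flow of $E_+$, the first variation of $E_+$ applied with $v=\tau_+$ gives $\frac{d}{dt}E_+(f_t)=-\int_M|\tau_+|^2\,dV_g\le 0$, so that $E_+(f_t)\le E_+(f_0)$ for all $t\in[0,T)$. Moreover, because $M$ is assumed balanced we may take $g_0=g$, so the source hypothesis of the previous proposition ($M$ uniformly equivalent to a $J_M$-compatible balanced metric) is satisfied trivially. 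These two facts supply, for free, two of the three hypotheses of that proposition.

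Now I would suppose the conclusion fails and derive a contradiction. If it fails, there is some $t_0\in[0,T)$ and a neighborhood $U$ of the tail $f(M\times[t_0,T))$ in $N$ carrying a $J_N$-compatible K\"ahler metric $h_0$ uniformly equivalent to $h$ on $U$. A K\"ahler metric is in particular almost K\"ahler, so $d\omega_{h_0}=0$ and $h_0$ meets the target hypothesis of the previous proposition, whose statement explicitly allows the comparison metric to be incomplete, as it is here, being defined only on $U$. For $t\in[t_0,T)$ the image of $f_t$ lies in $U$, and by the first paragraph $E_+(f_t)$ is uniformly bounded there. Applying the previous proposition to the family $\{f_t\}_{t\in[t_0,T)}$ with source metric $g$ and target comparison metric $h_0$ then yields a uniform bound $E(f_t)\le C_1$ on $[t_0,T)$.

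It remains to control the complementary interval $[0,t_0]$, on which $f$ is a smooth family over a compact parameter set, so $t\mapsto E(f_t)$ is continuous and hence bounded by some $C_2$. Setting $C=\max\{C_1,C_2\}$ produces a uniform energy bound $E(f_t)\le C$ on all of $[0,T)$, contradicting the hypothesis that no such bound exists. The only point that genuinely requires care --- and hence the crux of the argument --- is the \emph{localization}: the previous proposition is phrased for a comparison metric on all of $N$, whereas here the K\"ahler metric lives only on $U$. This is harmless, since every ingredient of its proof (the homotopy invariance of $K^{g_0h_0}(f_t)$ via $d\omega_{h_0}=0$ and $d^*\omega_{g_0}=0$, and the pointwise comparisons of $E$ and $E_+$) is evaluated along the image of $f$, which for $t\ge t_0$ is contained in $U$; one simply verifies that the homotopy $\{f_t\}_{t\in[t_0,T)}$ never leaves $U$, so that $f_t^*\omega_{h_0}$ and $K^{g_0h_0}(f_t)$ are well defined and the proposition applies verbatim.
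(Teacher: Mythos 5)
Your proposal is correct and follows exactly the route the paper intends: the paper states this corollary as immediate from the preceding proposition, and your contrapositive argument---monotonicity of $E_+$ along the negative gradient flow, taking $g_0=g$ by the balanced hypothesis, using the K\"ahler metric on the neighborhood $U$ as the (possibly incomplete) almost K\"ahler comparison metric, and handling $[0,t_0]$ by compactness---is precisely the filled-in version of that one-line deduction. Your attention to the localization issue (that the homotopy $\{f_t\}_{t\ge t_0}$ stays in $U$, so the homotopy invariance of $K$ applies there) is a worthwhile detail the paper glosses over.
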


\section{$\delbar$-harmonic Map Flow and $\delbar$-harmonic maps of Riemannian Surfaces}
\subsection{An example of the Flow into a Non-K\"ahler Surface}
Pick some $\alpha>1$ and let $N=S^3\times S^1=\cee^2\setminus\{0\}/\mathbb{Z}$ be the Hopf surface generated by the $\mathbb{Z}$ action given by $(z,w)\mapsto(\alpha z,\alpha w)$ on $\cee^2\setminus\{0\}$. $N$ is a compact complex manifold which cannot admit any K\"ahler metrics for Hodge theoretic reasons. If $\rho$ denotes the distance to $0$ in $\cee^2$, a Hermitian metric $h$ on $N$ has corresponding two form $\omega_N(\cdot,\cdot)=h(J_N\cdot,\cdot)$ given by
\[\omega_N=\frac{1}{2\rho^2}\ii\partial\bar{\partial}\rho^2.\]
Notice that this is indeed a metric on $N$, as it is invariant under scalar multiplication and unitary transformations, and is moreover locally conformal to the standard Euclidean metric on $\arr^4$.
A similar construction applied to $\cee^*$ gives a torus $M=T^2=\cee^*/\mathbb{Z}$, and K\"ahler metric $g$ with corresponding K\"ahler form
\[\omega_M=\frac{1}{2\rho^2}\ii \partial\bar{\partial}\rho^2,\]
which is likewise invariant, in fact it is a flat metric on $M$.

Pick some orthonormal basis $e_i$ of $\arr^4$ so that the standard complex structure on $\cee^2\sim\arr^4$ takes the form $Je_1=e_2$ and $Je_3=e_4$. Take coordinates $y^i$ on $\arr^4$ induced by this basis. Do a similar construction for $\arr^2\sim\cee$ and call the corresponding coordinates $x^i$. For any pair $(u,v)$ of orthonormal vectors in $\arr^4$, consider the $\arr$-linear map $f:\arr^2\to \arr^4$ given by
\[f(x^1,x^2)=x^1u+x^2v.\]
Notice that $f$ is an orthogonal embedding since $u$ and $v$ are orthonormal. Linearity implies that $f$ is equivariant with respect to the $\mathbb{Z}$ action and therefore descends to a map $f:M\to N$ of the torus into the Hopf surface. By the orthogonality, it is evident that $f^*h=g$ and, moreover, $f$ is totally geodesic. This is most easily seen by noting that $h$ can be viewed as a bi-invariant metric with respect to some Lie group structure on $N$ for which $f:M\to N$ is the inclusion of a torus subgroup.

These $f$ are therefore a family of harmonic maps $f:M\to N$ parameterized by orthonormal pairs in $\arr^4$. We can compute $K(f)$ for these maps directly. First, note that since $\dim_\cee M=1$,
\[K(f)=-\frac{1}{2}\int_M\langle\omega_M,f^*\omega_N\rangle\omega_M=-\int_Mf^*\omega_N.\]
In the coordinates $y^i$
\[\omega_N=\frac{1}{\rho^2}(dy^1\wedge dy^2+dy^3\wedge dy^4),\]
and so
\begin{align*}
f^*\omega_N&=\frac{1}{\rho^2}(u^1v^2-u^2v^1+u^3v^4-u^4v^3)(dx^1\wedge dx^2)\\&=(u^1v^2-u^2v^1+u^3v^4-u^4v^3)\omega_M,
\end{align*}
therefore
\begin{align*}
K(f)=-(u^1v^2-u^2v^1+u^3v^4-u^4v^3)V,
\end{align*}
where $V=2\pi\log\alpha$ is the volume of $T^2$ with respect to $g$. In particular, it is clear from this example that $K$ is not a homotopy invariant. In fact, the homotopy invariance of $K$ is exactly what allows the pseudoholomorphic energy to distinguish holomorphic maps from harmonic maps.
\begin{prop}
Let $\mathcal{F}$ denote the family of all such $f:T^2\to S^3\times S^1$ constructed as above. Then this family is preserved by the $\delbar$-harmonic map heat flow and, for any $f_0\in\mathcal{F}$, the flow exists for all time and converges subsequentially to a holomorphic or anti-holomorphic map $f_\infty:T^2\to S^3\times S^1$.
\begin{proof}
Note that $\langle A(f),Tf\rangle=0$. This is because this depends on $d\omega_N(Tf,Tf,Tf)$ but $Tf$ only has rank 2. Therefore, since each $f\in\mathcal{F}$ is harmonic, we have that $\tau_+=\tau+A=A$ is perpendicular to the image of $f$ at each point. This, together with the fact that $f$ is linear and so $A$ is linear in the coordinates $x^i$, implies that the evolution equation $\partial_tf=A$ preserves the family. We can then view the flow in this family as given by a smooth vector field on the space of oriented orthonormal 2-frames in $\arr^4$, establishing long time existence. Convergence to a $\delbar$-harmonic map is then immediate from monotonicity of $E_+$ and compactness of this family.

To see that the limiting map must be holomorphic or anti-holomorphic, note that the energy $E_+$ is invariant $E_+(U\circ f)=E_+(f)$ under the unitary group of $\cee^2$. Since these act transitively on the unit vectors, we can assume the limiting map has $u=e_1$. But then a direct computation shows that, when $u=e_1$, $A(f)=0$ if, and only if, $v=\pm e_2$. Thus the limiting map is holomorphic or anti-holomorphic.
\end{proof}
\end{prop}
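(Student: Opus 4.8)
The plan is to reduce the full heat flow on $\mathcal{F}$ to a finite-dimensional ODE on the Stiefel manifold $V_2(\arr^4)$ of oriented orthonormal $2$-frames, and then analyze that ODE. The starting point is that each $f\in\mathcal{F}$ is harmonic, so $\tau(f)=0$ and $\tau_+(f)=A(f)$. Since $M$ is a Riemann surface its fundamental form is the volume form, so $\star\omega_M=1$ and $d^*\omega_M=0$; hence the first term of $A$ drops and $2A^i=\omega_{\ab}f^j_\ga f^k_\dg(d\omega_N)_{ljk}g^{\ag\ga}g^{\bg\dg}h^{li}$. The decisive algebraic observation is that $\langle A,Tf\rangle$ pairs $d\omega_N$ with the three vectors $f^l_\mu,f^j_\ga,f^k_\dg$, all of which lie in the image plane $P=\mathrm{span}(u,v)$; since $d\omega_N$ is a $3$-form and $P$ is $2$-dimensional, this vanishes identically, so $A(f)$ is everywhere $h$-normal to the image. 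Because $h$ is conformally Euclidean, $h$-normality is Euclidean orthogonality, so in fact $A(f)(x)\perp P$ for every $x$.

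Next I would show that $A(f)(x)$ is \emph{linear} in $x$, which is where the conformal factors of the two metrics must conspire. Working in the coordinates $x^i$, $y^i$ of the example, one has $g^{\ab}=\rho_M^{2}\delta^{\ab}$, $h^{ij}=\rho_N^{2}\delta^{ij}$, $\omega_{M,\ab}=\rho_M^{-2}\epsilon_{\ab}$, and $d\omega_N=-2\rho_N^{-4}(\sum_m y^m dy^m)\wedge(dy^1\wedge dy^2+dy^3\wedge dy^4)$, so $(d\omega_N)_{ljk}$ is $\rho_N^{-4}$ times a linear function of $y$. Evaluating at $y=f(x)=x^1u+x^2v$ and using orthonormality to get $\rho_N^2=|f(x)|^2=(x^1)^2+(x^2)^2=\rho_M^2$, the powers of $\rho$ in $2A^i$ combine as $\rho^{-2}\cdot\rho^{-4}\cdot\rho^{4}\cdot\rho^{2}=\rho^0$ and cancel completely, leaving a constant-coefficient linear function of $x$. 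Writing $A(f)(x)=x^1a+x^2b$ with $a,b\in\arr^4$ depending smoothly on $(u,v)$, the normality above gives $a,b\perp P$, whence the induced frame evolution $\dot u=a$, $\dot v=b$ preserves $\langle u,u\rangle$, $\langle v,v\rangle$ and $\langle u,v\rangle$. Thus $\tau_+$ is tangent to $V_2(\arr^4)$, the family $\mathcal{F}$ is preserved, and the flow is a smooth vector field on the compact manifold $V_2(\arr^4)$, giving long-time existence.

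For convergence I would use that $E_+$ is non-increasing along the flow and bounded below by $0$, so that $\int_0^\infty\|\tau_+\|_{L^2}^2\,dt\le E_+(f_0)<\infty$ forces $|A|\to0$ along some sequence $t_i\to\infty$; by compactness of $V_2(\arr^4)$ a subsequence of frames converges to a zero of the vector field, i.e. a map with $\tau_+=A=0$. To identify it, I would invoke that $E_+(U\circ f)=E_+(f)$ for $U\in U(2)$ and that $U(2)$ acts transitively on the unit sphere of $\ctwo$, so the limiting frame may be normalized to $u=e_1$; solving $A=0$ then forces $v=\pm e_2$, the case $v=e_2$ being holomorphic and $v=-e_2$ anti-holomorphic. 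The main obstacle is the linearity step: one must verify that the position-dependent conformal factors cancel exactly, so that $A(f)$ is genuinely linear in $x$ rather than merely homogeneous of degree one. This self-similarity, together with the explicit computation $A=0\Rightarrow v=\pm e_2$, are the two calculations requiring care, while the normality argument, the reduction to a Stiefel vector field, and the subsequential convergence are structural.
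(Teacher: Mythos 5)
Your proposal is correct and follows essentially the same route as the paper's proof: harmonicity reduces $\tau_+$ to $A$, the rank-two image kills $d\omega_N(Tf,Tf,Tf)$ and gives normality, linearity of $A$ reduces the flow to a smooth vector field on the space of oriented orthonormal $2$-frames in $\arr^4$, and monotonicity of $E_+$ together with $U(2)$-invariance identifies the subsequential limit as holomorphic or anti-holomorphic. The only difference is that you spell out details the paper asserts without computation, chiefly the cancellation of conformal factors ($\rho^{-2}\cdot\rho^{-4}\cdot\rho^{4}\cdot\rho^{2}=\rho^{0}$) behind the claim that $A$ is linear in the coordinates $x^i$.
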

This relatively simple example demonstrates that the flow can distinguish a holomorphic map from a harmonic map in non-K\"ahler settings. In light of Corollary 3.9, if there is a singularity in the flow and a uniform energy bound does not hold, then the image of the flow near the singularity must be quite wild in $S^3\times S^1$; any proper compact subset of $S^3\times S^1$ admits a K\"ahler metric which, due to compactness, is uniformly equivalent to $h$. Therefore the lack of a uniform energy bound would mean the solution leaves every proper compact subset of the Hopf surface.
\subsection{Bubbling}
In this section we will consider the $\delbar$-harmonic map heat flow for maps $f:\Sigma\to N$ between a compact Riemann surface $\Sigma$ and a compact, almost Hermitian manifold $N$. In this setting the pseudoholomorphic energy $E_+$ is conformally invariant, as is readily seen from the fact that we now have
\begin{align*}
E_+(f)=\int_\Sigma\frac{1}{2}|Tf|^2dV-\int_{\Sigma}f^*\omega_N.
\end{align*}
The most important observation we can make is that in this form the functional is exactly amenable to the fantastic result of Riviere \cite{RIVCONS} on the regularity of two variable conformally invariant elliptic systems. Specifically, we will quote the following theorem.
\begin{thrm}[Theorem 1.1 of \cite{RIVCONS}]
Let $B$ be a ball in $\arr^2$ and let $u\in W^{1,2}(B,\arr^n)$ be a weak solution to the system
\begin{align*}
\Delta u^i=\Omega^i_j(\N u^j),
\end{align*}
where $\Omega\in L^2(B,\mathfrak{so}(n)\otimes\wedge^1\arr^2)$. Then $u$ is locally H\"older continuous in $B$.
\end{thrm}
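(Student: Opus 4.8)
The plan is to exploit the one piece of structure that distinguishes this system from a generic semilinear one: the \emph{antisymmetry} of $\Omega$, i.e. $\Omega^i_j=-\Omega^j_i$. The difficulty to overcome is that the source $\Omega\cdot\N u$ lies a priori only in $L^1(B)$, and in two dimensions the Laplacian does not upgrade an $L^1$ datum to a continuous solution (there are unbounded solutions of $\Delta w=f$ with $f\in L^1$). Naive bootstrapping is therefore doomed, and the goal is to rewrite the equation as a \emph{conservation law} whose right-hand side, instead of being an arbitrary $L^1$ function, is a sum of Jacobian determinants of $W^{1,2}$ functions; by compensated compactness such Jacobians live in the Hardy space $\mathcal{H}^1$, and this is exactly the gain that restores continuity. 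Since H\"older continuity is a local statement and $\Omega\in L^2$, I may first shrink $B$ so that $\|\Omega\|_{L^2(B)}$ is smaller than a universal constant $\epsilon>0$, by absolute continuity of the integral.

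The main step, and the genuine obstacle, is a gauge-fixing lemma modelled on Uhlenbeck's Coulomb gauge. The claim is that when $\|\Omega\|_{L^2(B)}<\epsilon$ there exist $A\in W^{1,2}\cap L^\infty(B,GL_n(\arr))$, with $A^{-1}$ in the same class and $A$ close to the identity, and $B\in W^{1,2}(B,M_n(\arr)\otimes\wedge^1\arr^2)$, satisfying
\[
\N A+A\,\Omega=\N^\perp B,\qquad \N^\perp=(-\partial_{x_2},\partial_{x_1}),
\]
together with controlling estimates of the type $\|A\|_{W^{1,2}}+\|A^{-1}\|_{W^{1,2}}\le C$ and $\|B\|_{W^{1,2}}\le C\|\Omega\|_{L^2}$. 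I would construct this gauge by a fixed-point/continuity-method argument: solve the linearized identity, estimate the quadratic remainder with Wente-type inequalities, and use the smallness of $\|\Omega\|_{L^2}$ to close the iteration and invert near the identity. All the real analytic weight of the theorem sits here.

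Granting the gauge, I compute $\operatorname{div}(A\N u)=\N A\cdot\N u+A\,\Delta u=(\N A+A\,\Omega)\cdot\N u$, so the equation $\Delta u=\Omega\cdot\N u$ becomes the conservation law
\[
\operatorname{div}(A\N u)=\N^\perp B\cdot\N u .
\]
The right-hand side is $\partial_{x_1}B^i_j\,\partial_{x_2}u^j-\partial_{x_2}B^i_j\,\partial_{x_1}u^j$, a sum of Jacobians of $W^{1,2}$ functions, hence an element of $\mathcal{H}^1(B)$ by the Coifman--Lions--Meyer--Semmes div-curl lemma, with norm bounded by $\|\N B\|_{L^2}\|\N u\|_{L^2}$. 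I would then work on small disks: Hodge-decompose $A\N u$ into an exact and a co-exact part, solve the Poisson problem with the $\mathcal{H}^1$ source using Wente's inequality to bound the potentials in $L^\infty\cap W^{1,2}$, and compare $u$ with its harmonic replacement. This yields a Dirichlet-energy decay $\int_{B_r}|\N u|^2\le C\,(r/R)^{2\lambda}\int_{B_R}|\N u|^2$ for some $\lambda\in(0,1)$ once $\epsilon$ is small, i.e. $\N u$ belongs to the Morrey space $L^{2,2\lambda}_{\mathrm{loc}}$. Morrey's Dirichlet-growth embedding then gives $u\in C^{0,\lambda}_{\mathrm{loc}}(B)$, which is the assertion. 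As already flagged, the one hard input is the gauge construction; with the conservation law in hand the passage to H\"older continuity is the standard compensated-compactness argument.
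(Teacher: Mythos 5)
The paper offers no proof of this statement at all: it is quoted verbatim as Theorem 1.1 of Rivi\`ere \cite{RIVCONS} and used as a black box, so the only meaningful comparison is with Rivi\`ere's original argument --- and your outline is essentially exactly that argument: smallness of $\|\Omega\|_{L^2}$ by shrinking the ball, the Coulomb-type gauge $\N A+A\,\Omega=\N^\perp B$ (Rivi\`ere's Theorem I.4, resting on an Uhlenbeck continuity-method lemma, which is indeed where the antisymmetry $\Omega\in\mathfrak{so}(n)$ is indispensable and where the analytic weight lies), the resulting conservation law $\operatorname{div}(A\N u)=\N^\perp B\cdot\N u$, the Coifman--Lions--Meyer--Semmes/Wente compensation to handle the Jacobian right-hand side, and Morrey decay plus Dirichlet-growth embedding for H\"older continuity. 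One small remark: Rivi\`ere's Theorem I.1 as stated asserts continuity, and the H\"older version quoted in this paper is the Morrey-decay refinement; your route proves that stronger form directly, so the proposal is correct and faithful to the cited source, with the gauge-construction lemma properly flagged as the one step carrying the real difficulty.
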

We derive a number of corollaries in applying this result to $\delbar$-harmonic maps of surfaces. Analogues of these are well known in the theory of harmonic maps.
\begin{cor}
Let $N\subseteq\arr^n$ be a smooth, compact, almost Hermitian manifold. Let $u\in W^{1,2}(B,N)$ be a weakly $\delbar$-harmonic map. Then $u$ is smooth. In particular, if $u:B/\{0\}\to N$ is smooth and $\delbar$-harmonic with finite energy, then $u$ is smooth in $B$.
\begin{proof}
As observed in Theorem 1.2 of \cite{RIVCONS}, any conformally invariant quadratic energy functional in two-dimensions has Euler-Lagrange equation in the form for which the previous theorem applies. Specifically, any functional of the form
\begin{align*}
\frac{1}{2}\int_{\Sigma}|Tu|^2dV+\int_{\Sigma}u^*\omega,
\end{align*}
where $\omega$ is any $C^1$ section of $\wedge^2T^*N$ has Euler-Lagrange equation in the form required by the theorem. The $E_+$ energy is exactly of this form, so any weakly $\delbar$-harmonic map $u:B\to N$ for which $E(u)<\infty$ is H\"older continuous. Smoothness of $u$ then follows from the smoothness of $N$, the H\"older continuity of $u$, and higher regularity theory of elliptic systems.
\end{proof}
\end{cor}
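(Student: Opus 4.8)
The plan is to recognize the $\delbar$-harmonic map equation on a surface as the Euler--Lagrange equation of a conformally invariant quadratic functional, rewrite it in the antisymmetric potential form to which the quoted theorem of Rivi\`ere applies, extract an initial H\"older estimate, and then bootstrap to full smoothness using only the smoothness of $N$. The removable-singularity claim will be deduced from the interior regularity claim once we check that a finite-energy map defined off a point is a genuine weak solution across that point.

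First I would record that, by the surface computation in Lemma~2.5, on a Riemann surface one has $E_+(u)=\tfrac12\int_\Sigma|Tu|^2\,dV-\int_\Sigma u^*\omega_N$, so $E_+$ is exactly of the form $\tfrac12\int_\Sigma|Tu|^2\,dV+\int_\Sigma u^*\omega$ with $\omega=-\omega_N$ a smooth section of $\wedge^2T^*N$. After the isometric embedding $N\subseteq\arr^n$ supplied by hypothesis, the first variation produces, for the Dirichlet part, the second fundamental form term $\Delta u^i=\mathrm{II}(u)(\N u,\N u)^i$, while the generalized Cartan formula of Proposition~2.6 shows the variation of $\int u^*\omega$ contributes the term $(d\omega)_{ijk}u^j_{x^1}u^k_{x^2}$. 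This latter term is manifestly of antisymmetric potential type: setting $(\Omega_1)^i{}_j=(d\omega)_{ijk}u^k_{x^2}$ and $(\Omega_2)^i{}_j=-(d\omega)_{ijk}u^k_{x^1}$, one has $\Omega^i{}_j\cdot\N u^j=2(d\omega)_{ijk}u^j_{x^1}u^k_{x^2}$ with each $\Omega_s$ valued in $\mathfrak{so}(n)$ because $d\omega$ is antisymmetric in its first two indices, and $\Omega\in L^2$ precisely because $u\in W^{1,2}$. Together with Rivi\`ere's recasting of the Dirichlet part (Theorem~1.2 of \cite{RIVCONS}), the full equation takes the form $\Delta u^i=\Omega^i{}_j\cdot\N u^j$ with $\Omega\in L^2(B,\mathfrak{so}(n)\otimes\wedge^1\arr^2)$, so the quoted theorem of Rivi\`ere gives $u\in C^{0,\alpha}_{\mathrm{loc}}(B)$.

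Next I would bootstrap. Once $u$ is continuous, the right-hand side reads $a(u)(\N u,\N u)$ with $a$ a smooth tensor assembled from the embedding, $\mathrm{II}$, and $d\omega_N$, so its coefficients are continuous in $x$; continuity of $u$ together with finite energy places the system in the small-oscillation regime of two-dimensional elliptic theory and yields $u\in W^{2,p}_{\mathrm{loc}}$ for every $p<\infty$, hence $u\in C^{1,\beta}_{\mathrm{loc}}$. With $\N u$ now H\"older the coefficients become H\"older, Schauder estimates upgrade $u$ to $C^{2,\beta}_{\mathrm{loc}}$, and iterating---using only the smoothness of $N$, $\omega_N$, and the embedding---gives $u\in C^\infty(B,N)$.

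For the removable-singularity statement I would reduce to the interior case. Given $u:B\setminus\{0\}\to N$ smooth, $\delbar$-harmonic, with $\int_B|Tu|^2<\infty$, the point $\{0\}$ has zero $2$-capacity, so $u$ extends to a map in $W^{1,2}(B,N)$ with no singular contribution to its distributional derivative at $0$. Testing the equation against $\varphi\,\psi_\varepsilon$, with $\varphi$ smooth and $\psi_\varepsilon$ a logarithmic cutoff vanishing near $0$, the error terms are bounded by $\|\N u\|_{L^2}\|\N\psi_\varepsilon\|_{L^2}\to0$ as $\varepsilon\to0$, again by zero capacity, so the extended $u$ is weakly $\delbar$-harmonic on all of $B$; the interior regularity already proved then makes it smooth. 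The main obstacle is the structural step of the second paragraph: confirming that the full operator---including the non-closed $\int u^*\omega_N$ contribution present when $d\omega_N\neq0$---genuinely lands in the antisymmetric $\mathfrak{so}(n)$-valued $L^2$ class. The $d\omega$ term is antisymmetric for the elementary reason given above, and the only remaining point is that the coefficient tensors built from $\omega_N$, $d\omega_N$, and $\mathrm{II}$ are bounded, which compactness of $N$ supplies; with this in hand the H\"older estimate and the subsequent bootstrap are standard.
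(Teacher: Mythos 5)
Your proposal is correct and takes essentially the same route as the paper: both recognize that on a surface $E_+(u)=\frac{1}{2}\int|Tu|^2dV-\int u^*\omega_N$ is a conformally invariant quadratic functional of exactly the type covered by Theorem~1.2 of Rivi\`ere, invoke the antisymmetric-potential theorem to get H\"older continuity, and then bootstrap to smoothness by standard elliptic theory. The only difference is that you verify details the paper delegates to the citation or states without proof --- namely the explicit $\mathfrak{so}(n)$-valued $L^2$ structure of the $d\omega_N$ term (correctly, using full antisymmetry of the $3$-form) and the zero-capacity logarithmic-cutoff argument showing a finite-energy solution on $B\setminus\{0\}$ is weakly $\delbar$-harmonic across the puncture --- both of which are sound.
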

Note that in the previous corollary the assumption of finite energy is essential, as the map $z\mapsto z^{-1}$ is clearly $\delbar$-harmonic with $E_+$ finite but is not smooth in a ball centered at the origin.
\begin{cor}
If $u:\arr^2\to N$ is $\delbar$-harmonic and has finite energy, then $u$ extends to a smooth $\delbar$-harmonic map $\tilde{u}:S^2\to N$.
\begin{proof}
Consider the map given by composing $u$ with stereographic projection from $S^2$ to $\arr^2$. The previous proposition then implies that there is a smooth extension of this map to the point at infinity.
\end{proof}
\end{cor}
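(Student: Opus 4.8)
The plan is to transplant $u$ to the sphere via stereographic projection and then remove the resulting point singularity using the removable singularity corollary just proved. Since the $\delbar$-harmonic map equation is the Euler-Lagrange equation of the conformally invariant functional $E_+$, the entire argument reduces to checking that conformality preserves both $\delbar$-harmonicity and finite energy. First I would fix an isometric embedding $N\hookrightarrow\arr^n$, so that the $W^{1,2}(B,N)$ framework of the preceding corollary applies verbatim, and note that $u$ is in fact smooth (either by hypothesis or by the smoothness corollary). I would then fix a stereographic projection $\sigma:S^2\setminus\{p\}\to\arr^2$ from a chosen pole $p$, which is a conformal diffeomorphism onto all of $\arr^2$, and set $v:=u\circ\sigma:S^2\setminus\{p\}\to N$, which is smooth because $u$ and $\sigma$ are.

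The central point is conformal invariance. In dimension two both the Dirichlet energy $\tfrac12\int|Tf|^2\,dV$ and the pullback term $\int f^*\omega_N$ are unchanged under pullback by a conformal diffeomorphism of the source, so $E_+$ is as well; this was recorded at the start of the section. Hence for any conformal diffeomorphism $\phi$ and any map $w$ one has $E_+(w\circ\phi)=E_+(w)$, and compactly supported variations correspond under $\phi$, so $w$ is a critical point of $E_+$ if and only if $w\circ\phi$ is. Applying this with $\phi=\sigma$ and $w=u$ shows that $v$ is $\delbar$-harmonic on $S^2\setminus\{p\}$ with respect to the round metric, while the invariance of the Dirichlet energy gives $E(v;S^2\setminus\{p\})=E(u;\arr^2)<\infty$. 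Thus $v$ is a smooth, finite-energy, $\delbar$-harmonic map on the complement of the single point $p$.

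Finally I would invoke the removable singularity corollary at $p$. Choosing a conformal coordinate chart carrying a neighborhood of $p$ to a ball $B$ with $p\mapsto 0$, the round metric is conformal to the flat metric there, so in this chart $v$ is a smooth, finite-energy, $\delbar$-harmonic map on $B\setminus\{0\}$; the preceding corollary then produces a smooth extension across $0$, hence across $p$. This yields a smooth $\delbar$-harmonic map $\tilde u:S^2\to N$ whose restriction to $S^2\setminus\{p\}$ equals $v=u\circ\sigma$, and therefore agrees with $u$ under $\sigma^{-1}$, as required. As for the main obstacle, essentially all of the analytic difficulty has been absorbed into the removable singularity corollary, which we are permitted to assume; the only step requiring genuine care is the conformal-invariance bookkeeping, namely verifying that pullback by $\sigma$ carries critical points of $E_+$ to critical points and preserves finite energy across the open and punctured manifolds involved. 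Once that is in place the extension is immediate.
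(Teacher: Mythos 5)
Your proposal is correct and follows the same route as the paper: compose $u$ with stereographic projection, use the two-dimensional conformal invariance of $E_+$ (and of the Dirichlet energy) recorded at the start of the section, and remove the point singularity at the pole via the preceding removable-singularity corollary. The paper's own proof is a two-line version of exactly this argument; your write-up merely makes explicit the conformal-invariance bookkeeping that the paper leaves implicit.
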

\begin{cor}
Suppose that a solution $u$ to the $\delbar$-harmonic map heat flow from a compact Riemann surface $\Sigma$ to a compact, almost Hermitian manifold $N$ exists on a maximal time interval $[0,T)$, where $T<\infty$, and there is a uniform energy bound on the solution. Then there exists a point $p\in\Sigma$, a sequence of times $t_i\nearrow T$, and a sequence of $r_i\searrow 0$ such that the family of maps $u_i(x)=u(exp_p(r_ix),t_i)$ converges to a limiting map $u_\infty:\arr^2\to N$ in $H^{2,2}_{loc}$ to a non-constant, smooth harmonic map with finite energy.
\begin{proof}
As shown in Proposition 3.3 we know there is a sequence of times $t_i\nearrow T$ and points $p_i\to p\in \Sigma$ such that $\lim_{i\to\infty}|Tf|(p_i,t_i)=\infty$ and $|Tf|(p_i,t_i)=\sup_{p\in\Sigma,t\leq t_i}|Tf|(p,t)$. Let $r_i^{-1}=|Tf|(p_i,t_i)$ and consider a geodesic ball centered at $p$ of some small radius $\rho$. For $x\in B_{r_i^{-1}\rho}(0)\subset T\Sigma_p$ let
\begin{align*}
u_i(x,t)=u(exp_p(xr_i),t_0+r_i^2t).
\end{align*}
Note that $u_i$ solves the $\delbar$-harmonic map heat flow with respect to the metric $g_i=exp_p(r_i\cdot)^*g$. Since this metric is converging in $C^2_{loc}$ to the flat metric on $\arr^2$ we can extract a subsequence $u_i\to u_\infty$ converging locally in $C^2(\arr^2\times(-\infty,0],N)$ where $u_\infty:\arr^2\times(-\infty,0]\to N$ is a non-trivial solution to the $\delbar$-harmonic map heat flow with finite energy and constant $E_+$-energy, in particular it is a $\delbar$-harmonic map with finite energy.
\end{proof}
\end{cor}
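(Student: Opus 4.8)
The plan is to carry out a parabolic blow-up (rescaling) analysis at the finite-time singularity, in the spirit of Struwe's treatment of the harmonic map heat flow, exploiting the conformal invariance of $E_+$ in real dimension two together with the monotonicity $\frac{d}{dt}E_+=-\int_\Sigma|\tau_+|^2$ of $E_+$ along its own gradient flow. First I would invoke the finite-time gradient blow-up criterion proved above: since $T<\infty$ is maximal, $\limsup_{t\uparrow T}|Tu|_\infty=\infty$. I then select times $t_i\uparrow T$ and points $p_i$ at which $\lambda_i:=|Tu|(p_i,t_i)=\sup_{\Sigma\times[0,t_i]}|Tu|\to\infty$, pass to a subsequence with $p_i\to p\in\Sigma$ by compactness, and set $r_i=\lambda_i^{-1}\searrow 0$. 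In a normal coordinate chart about $p$ I define the rescaled family $u_i(x,t)=u(\exp_p(r_ix),t_i+r_i^2t)$ on balls of radius $\sim r_i^{-1}$; these solve the $\delbar$-harmonic map heat flow for the pulled-back metrics $g_i=(\exp_p(r_i\cdot))^*g$, which converge in $C^2_{loc}$ to the flat metric on $\arr^2$. The near-maximality of $(p_i,t_i)$ gives the normalization $|Tu_i|(0,0)=1$ together with a uniform pointwise bound $|Tu_i|\le 1$ on the relevant domain.

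The next step is to upgrade this $C^0$-gradient bound to uniform higher-order local bounds. Because $\dim_\arr\Sigma=2$, the Dirichlet energy is conformally invariant, so the hypothesized uniform bound $E(u)<C$ passes to a uniform local energy bound for the $u_i$; combined with the pointwise gradient bound and the evolution equation for $Tf$ from Proposition \ref{estimate}, standard parabolic Schauder and bootstrap estimates yield uniform $C^k_{loc}$, and in particular $H^{2,2}_{loc}$, bounds. An Arzel\`a--Ascoli and diagonal argument then extracts a subsequence converging in $H^{2,2}_{loc}$ (indeed $C^2_{loc}$) to a limit $u_\infty:\arr^2\times(-\infty,0]\to N$ solving the flow for the flat metric. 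Passing the normalization to the limit gives $|Tu_\infty|(0,0)=1$, so $u_\infty$ is non-constant, while lower semicontinuity of the energy under this convergence keeps $E(u_\infty)$ finite.

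The crucial point, which I expect to be the main obstacle, is to show that the bubble is a genuine (time-independent) $\delbar$-harmonic map rather than merely a nontrivial solution of the flow. For this I would use that $E_+\ge 0$ and that $E_+$ is monotone non-increasing, so that
\[
\int_0^T\!\!\int_\Sigma|\tau_+|^2\,dV\,dt
=E_+(f_0)-\lim_{t\uparrow T}E_+(f_t)\le E_+(f_0)<\infty .
\]
The dissipation integral $\int\!\int|\tau_+|^2\,dV\,dt$ is scale-invariant precisely in the conformal dimension $n=2$ (the spatial factor $r_i^{-2}$, the temporal factor $r_i^{-2}$, and the $r_i^4$ from the second-order tension cancel), so after rescaling the total tension of $u_i$ over its expanding parabolic domain stays bounded by $E_+(f_0)$. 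Choosing the $t_i$ so that the instantaneous tension $\int_\Sigma|\tau_+|^2(t_i)$ also tends to zero — possible since the time integral is finite, and arrangeable consistently with the gradient-maximizing selection by the standard Struwe argument — forces $\tau_+[u_\infty]=0$, whence $\del_tu_\infty=\tau_+[u_\infty]=0$. Thus the limit is independent of $t$, and the time slice $u_i(x)=u(\exp_p(r_ix),t_i)$ converges to a $t$-independent, weakly $\delbar$-harmonic map $u_\infty:\arr^2\to N$.

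Finally, smoothness is immediate from the earlier regularity corollary built on Rivi\`ere's theorem: a finite-energy weakly $\delbar$-harmonic map of a two-dimensional domain has Euler--Lagrange system of the conformally invariant form $\Delta u=\Omega\!\cdot\!\N u$ with $\Omega\in L^2$ valued in $\mathfrak{so}(n)$, hence is H\"older continuous and, by elliptic bootstrapping, smooth. The delicate part throughout is the coordination in the previous paragraph between the gradient blow-up and the vanishing of the tension; once that is secured, the remaining ingredients are the now-standard package of two-dimensional conformal invariance, parabolic regularity, and the removable-singularity regularity for conformally invariant systems.
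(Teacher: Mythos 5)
Your proposal is correct and follows essentially the same route as the paper: blow-up at maximal-gradient points $(p_i,t_i)$ with $r_i^{-1}=|Tu|(p_i,t_i)$, rescaled solutions with pulled-back metrics converging to the flat metric, compactness from the gradient bound plus parabolic regularity, and the scale-invariance of the dissipation $\int\!\!\int|\tau_+|^2$ together with its finiteness (from monotonicity of $E_+\ge 0$) to conclude the limit is a genuine $\delbar$-harmonic map, with smoothness via the Rivi\`ere-based regularity corollary. Your write-up is in fact more explicit than the paper's, which compresses the time-independence step into the assertion that the limit has ``constant $E_+$-energy.''
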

\begin{cor}
With the assumptions of the previous corollary, there must exist a $\delbar$-harmonic sphere in $N$. In particular, if $N$ does not admit a non-trivial $\delbar$-harmonic $S^2$, then any solution to the $\delbar$-harmonic map heat flow with a uniform energy bound from any compact Riemann surface $\Sigma$ to $N$ exists smoothly for all time.
\begin{proof}
By the previous corollary, if a finite time singularity occurs then there is a non-trivial $\delbar$-harmonic map $u_\infty:\arr^2\to N$ with finite energy. This $u$ then extends by Corollary 4.3 to a $\delbar$-harmonic map of $S^2$ into $N$.
\end{proof}
\end{cor}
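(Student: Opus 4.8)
The plan is to run a Struwe-type parabolic blow-up at the finite-time singularity, produce a nonconstant finite-energy $\delbar$-harmonic map on $\arr^2$, and then conformally compactify it to a map on $S^2$. First I would extract a good singular scale. Since $T<\infty$ is maximal, the blow-up obstruction (Proposition 3.3) gives $\limsup_{t\to T}|Tf|_{C^0}=\infty$, so I select times $t_i\nearrow T$ and points $p_i\in\Sigma$ realizing the space-time supremum $|Tf|(p_i,t_i)=\sup_{\Sigma\times[0,t_i]}|Tf|=:r_i^{-1}\to\infty$, and pass to a subsequence with $p_i\to p$. Choosing $p_i$ as a maximum over all earlier times is what will furnish uniform gradient bounds after rescaling.

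Next I would rescale parabolically, setting $u_i(x,t)=u(\exp_p(x r_i),\,t_i+r_i^2 t)$ for $x$ in an expanding ball and $t\le 0$. Each $u_i$ solves the $\delbar$-harmonic map heat flow for the pulled-back metric $g_i=\exp_p(r_i\cdot)^*g$, which converges in $C^2_{loc}$ to the flat metric on $\arr^2$. By construction $|Tu_i|(0,0)=1$ while $|Tu_i|\le 1$ on the relevant domain, so the rescaled maps are locally uniformly Lipschitz. The decisive point is that in real dimension two the energy $E_+$ is conformally, hence scale, invariant, so the hypothesis $E(f)<C$ passes to the $u_i$ and the resulting bubble will have finite energy.

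I would then extract a limit and verify it is a genuine $\delbar$-harmonic map. From the Lipschitz bound and the higher-order parabolic estimates for the flow (bounding $|\N^k Tf|$ as in the $E_a$ argument of Section 3), a subsequence converges in $C^2_{loc}(\arr^2\times(-\infty,0])$ to a solution $u_\infty$ which is nonconstant because $|Tu_\infty|(0,0)=1$. To see $u_\infty$ is static rather than a nontrivial flow line, I use monotonicity: since $\frac{d}{dt}E_+=-\int_\Sigma|\tau_+|^2$ and $E_+\ge 0$, the space-time dissipation $\int_0^T\!\int_\Sigma|\tau_+|^2\,dV\,dt$ is finite, and under the scale-invariant blow-up this forces $\tau_+(u_\infty)\equiv 0$. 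Thus $u_\infty:\arr^2\to N$ is a nonconstant finite-energy $\delbar$-harmonic map; note that in the flat limit $d^*\omega_M$ vanishes, so the surviving lower-order piece of $A$ is exactly the $d\omega_N$ term distinguishing the $\delbar$-harmonic equation from the harmonic one. Composing with stereographic projection and invoking the removable-singularity statement (Corollary 4.3, itself a consequence of Riviere's regularity theorem), $u_\infty$ extends to a smooth nonconstant $\delbar$-harmonic map $\theta:S^2\to N$.

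The main obstacle lies in the third step: simultaneously guaranteeing that the bubble is nonconstant and that it is static. Nontriviality is encoded in the normalization $|Tu_i|(0,0)=1$, but one must confirm that this lower bound genuinely survives to the limit, which relies on the uniform higher-order estimates together with the $C^2_{loc}$ convergence $g_i\to g_{\mathrm{flat}}$. Staticity rests on the finiteness of the space-time $L^2$ norm of $\tau_+$, which is where the \emph{uniform energy bound} hypothesis is essential; without it the total dissipation is uncontrolled and the blow-up limit could fail to be a $\delbar$-harmonic map at all. A secondary subtlety, which I expect to be routine, is checking that the almost-Hermitian lower-order terms in $\tau_+$ involving $\N J_N$ and $d\omega_N$ rescale so that the limiting equation is precisely the flat $\delbar$-harmonic map equation.
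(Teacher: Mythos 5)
Your proposal is correct and follows essentially the same route as the paper, whose proof simply cites the previous corollary (the parabolic blow-up producing a non-constant, finite-energy $\delbar$-harmonic map $u_\infty:\arr^2\to N$) and then Corollary 4.3 to compactify to $S^2$; your write-up just inlines that blow-up argument rather than quoting it. One small correction to your commentary: staticity of the blow-up limit does not hinge on the uniform energy bound, since the dissipation $\int_0^T\!\int_\Sigma|\tau_+|^2\,dV\,dt$ is finite from monotonicity and non-negativity of $E_+$ alone; the hypothesis $E(f)<C$ is instead what gives the bubble finite standard energy, without which the removable-singularity step of Corollary 4.3 fails (cf.\ the example $z\mapsto z^{-1}$, which has finite $E_+$ but not finite $E$).
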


\section{Concluding Remarks}
We have cataloged a number of results on $\delbar$-harmonic maps which are the analogues of results known for harmonic maps. Our eventual goal will be to use $\delbar$-harmonic maps to study almost Hermitian manifolds, specifically non-K\"ahler complex manifolds. We end the paper with a list of important questions/problems which would serve as a starting point for future work.
\begin{itemize}
\item Is there always a uniform energy bound up to the first singular time $T$ for a solution to the flow? As we have seen, the lack of such a bound for a Riemann surface implies the spacetime image $f(\Sigma\times [t_0,T))$, for any $t_0$, has no neighborhood which admits an equivalent almost K\"ahler metric. Perhaps the simplest situation where a uniform energy bound would fail to hold along the flow would be for maps $f:\Sigma\to S$ from a compact Riemann surface into an Inoue surface. These are compact complex surfaces where there are no holomorphic maps $f:\Sigma\to S$ for any compact Riemann surface $\Sigma$, see \cite{INOUE} for their construction. On the other hand, given Proposition 3.8 one might expect a uniform energy bound holds whenever the source is balanced and the target is uniformly locally almost K\"ahler.
\item Under what conditions is a $\delbar$-harmonic map pseudo-holomorphic? For K\"ahler manifolds there is the celebrated complex analyticity result of Siu \cite{SIU} for maps of strongly negatively curved K\"ahler manifolds. There are also known conditions (see Chapter 8, Section 3 of \cite{SY}, for example) for stable harmonic maps $f:S^2\to N$ into a K\"ahler manifold which are sufficient to conclude complex analyticity. Both results strongly rely on the K\"ahler assumption.
\item Recently Rupflin and Topping \cite{RT} have considered the Teichm\"uller Harmonic map flow for maps $u:\Sigma\to N$ from a Riemann surface $\Sigma$ into a Riemannian manifold $N$. Their flow is a coupling of the harmonic map heat flow to a flow of the complex structure on $\Sigma$. We are interested in playing a similar game for the $\delbar$-harmonic map heat flow. Specifically, for a given complex structure on $\Sigma$ there may not be any holomorphic maps $u:\Sigma\to N$ and so if our goal is to locate holomorphic maps within a homotopy class, one expects that a change in complex structure is necessary.
\item Given two almost Hermitian structures on $S^6$ or $\cee P^3$, what can be said about the $\delbar$-harmonic map heat flow beginning from the identity map of these spaces? A small but notable observation here is that the homotopy class of the identity map of $S^6$ never has a global minimizer of the harmonic map energy, but the identity map is always a globally minimizing $\delbar$-harmonic map.
\end{itemize}

\bibliographystyle{hamsplain}

\end{document}